\documentclass[a4paper,11pt,leqno]{amsart}

\usepackage{amsthm}
\usepackage{amsmath,amssymb,cite,mathrsfs,overpic}
\usepackage{graphicx}
\usepackage{latexsym,bm}
\usepackage{secdot}
\usepackage{amscd}

\setlength{\textheight}{23cm}
\setlength{\textwidth}{15cm}
\setlength{\oddsidemargin}{0cm}
\setlength{\evensidemargin}{0cm}
\setlength{\topmargin}{-0.5cm}

\DeclareFontEncoding{OT2}{}{}
\DeclareFontSubstitution{OT2}{cmr}{m}{n}
\DeclareSymbolFont{cyss}{OT2}{wncyss}{m}{n}
\DeclareMathSymbol{\sh}{\mathbin}{cyss}{`x}
\DeclareMathOperator*{\Int}{\int \cdots \int}

\newtheorem{theorem}{Theorem}[section]
\newtheorem{lemma}[theorem]{Lemma}
\newtheorem{prop}[theorem]{Proposition}

\theoremstyle{definition}

\newtheorem{example}[theorem]{Example}
\newtheorem{remark}[theorem]{Remark}

\allowdisplaybreaks

\begin{document}

\title[Shuffle products for multiple zeta values]{Shuffle products for multiple zeta values and partial fraction decompositions of zeta-functions of root systems}
\author{Yasushi Komori}
\address{Graduate School of Mathematics, Nagoya University, Chikusa-ku, Nagoya 464-8602 Japan}
\email{komori@math.nagoya-u.ac.jp}

\author{Kohji Matsumoto}
\address{Graduate School of Mathematics, Nagoya University, Chikusa-ku, Nagoya 464-8602 Japan}
\email{kohjimat@math.nagoya-u.ac.jp}

\author{Hirofumi Tsumura}
\address{Department of Mathematics and Information Sciences, Tokyo Metropolitan University, 1-1, Minami-Ohsawa, Hachioji, Tokyo 192-0397 Japan}
\email{tsumura@tmu.ac.jp}

\date{}
\subjclass[2000]{Primary 11M41; Secondary 17B20, 40B05}

\maketitle

\begin{abstract}
The shuffle product plays an important role in the study of multiple zeta values. This is expressed in terms of multiple integrals, and also as a product in a certain non-commutative polynomial algebra over the rationals in two indeterminates. In this paper, we give a new interpretation of the shuffle product. In fact, we prove that the procedure of shuffle products essentially coincides with that of partial fraction decompositions of multiple zeta values of root systems. As an application, we give a proof of extended double shuffle relations without using Drinfel'd integral expressions for multiple zeta values. 
Furthermore, our argument enables us to give some functional relations which include double shuffle relations. 
\end{abstract}

\section{Introduction}\label{sec-1}

Let $\mathbb{N}$, $\mathbb{N}_0$, $\mathbb{Z}$, $\mathbb{Q}$, $\mathbb{R}$ and $\mathbb{C}$ be the set of natural numbers, nonnegative integers, rational integers, rational numbers, real numbers and complex numbers, respectively. 

The multiple zeta value (MZV) of depth $N$ is defined by 
\begin{equation}
\zeta(k_1,k_2,\ldots,k_N)=\sum_{m_1>m_2>\cdots>m_N>0}\frac{1}{m_1^{k_1}m_2^{k_2}\cdots m_N^{k_N}} \label{1-1}
\end{equation}
for $k_1,k_2,\ldots,k_N \in \mathbb{N}$ with $k_1>1$, where $k_1+\cdots+k_N$ is called its weight (see Zagier \cite{Za} and Hoffman \cite{Ho}). Researches on MZVs have advanced actively in this decade (see surveys and related articles \cite{BB0,BB2,Ho3,Kaneko}). 

It is known that the MZV can be expressed as a multiple integral, from which a lot of interesting formulas among MZVs have been systematically obtained. 
Hoffman \cite{Ho2} constructed an algebraic setup for the 
theory of MZVs as follows (see also \cite{BB1}). 
Let $\frak{H}=\mathbb{Q}\langle x,y\rangle$ be the non-commutative polynomial algebra over $\mathbb{Q}$ in indeterminates $x,y$. The monomial $u_1u_2\cdots u_d \in \frak{H}$, where each $u_j=x$ or $y$, is often called a ``word''. Let $\frak{H}^1$ and $\frak{H}^0$ be its subalgebras $\mathbb{Q}+\frak{H}y$ and $\mathbb{Q}+x\frak{H}y$, respectively. Let $Z\,:\,\frak{H}^0 \to \mathbb{R}$ be the $\mathbb{Q}$-linear map defined by 
\begin{equation}
Z\left( x^{k_1-1}yx^{k_2-1}y\cdots x^{k_N-1}y\right)=\zeta\left( k_1,k_2,\cdots,k_N\right). \label{1-5}
\end{equation}
Then it is known that
\begin{equation}
\begin{split}
Z(u_1u_2\cdots u_d)& =\Int_{1>t_1>\cdots>t_d>0}\omega_{u_1}(t_1)\omega_{u_2}(t_2)\cdots \omega_{u_d}(t_d),
\end{split}
\label{1-4}
\end{equation}
where $\omega_x(t)=dt/t$ and $\omega_y(t)=dt/(1-t)$. The right-hand side of \eqref{1-4} is often called the Drinfel'd integral. Note that the connection between the Drinfel'd integrals and MZVs was first noticed by Kontsevich (see \cite{Za}). 
By using this expression, several fascinating relation formulas for MZVs have been obtained, for example, \cite{BBB1,BB2,Ho-Oh,IKZ,Ohno,O-Z} and so on. In particular, it is a key fact that the product of two MZVs can be expressed as a sum of MZVs. Additionally, the associator relations for MZVs were discovered by Drinfel'd \cite{Dri}, and the dimension of the $\mathbb{Q}$-algebra generated by MZVs has been studied (see Goncharov \cite{Gon}, Terasoma \cite{Tera}).

For simplicity, we let $z_p=x^{p-1}y\in \frak{H}^1$ $(p\in \mathbb{N})$. 
It follows that $\{ z_p\,|\,p\in \mathbb{N}\}$ and $1$ are the generators of $\frak{H}^1$. On $\frak{H}^1$, the harmonic product $\ast$ is defined by 
\begin{equation}
\begin{split}
& 1\ast w_1=w_1\ast 1=w_1,\\
& z_p w_1\ast z_q w_2=z_p(w_1\ast z_qw_2)+z_q(z_pw_1\ast w_2)+z_{p+q}(w_1\ast w_2),
\end{split}
\label{1-6}
\end{equation}
for any $p,q \in \mathbb{N}$ and $w_1,w_2\in \frak{H}^1$, and extended by $\mathbb{Q}$-bilinearity. 
Here, $z_p w_1\ast z_q w_2$ is to be read as $(z_p w_1)\ast (z_q w_2)$, but we omit the parentheses. We will use the same convention for $\sh$ products. 
With respect to the harmonic product, it is known that $\frak{H}^1$ becomes a commutative algebra, $\frak{H}^0$ is its subalgebra, and the map $Z$ defined by \eqref{1-4} is an algebra-homomorphism, that is, 
\begin{equation}
Z(w_1\ast w_2)=Z(w_1)Z(w_2)  \label{1-7}
\end{equation}
for $w_1,w_2\in \frak{H}^0$. 

By repeated application of \eqref{1-6}, we find that $w_1\ast w_2$ $(w_1,w_2\in \mathfrak{H}^0)$ can be written as a linear combination of words in $\mathfrak{H}^0$. We call this procedure the ``harmonic product procedure''\,(HPP) for $w_1\ast w_2$. 
For example, we have $z_{p}\ast z_q=z_p z_q+z_qz_p+z_{p+q}$ 
from definition \eqref{1-6}. Hence, by \eqref{1-5} and \eqref{1-7}, we obtain
\begin{equation}
\zeta(p)\zeta(q)=\zeta(p,q)+\zeta(q,p)+\zeta(p+q).  \label{1-7-2}
\end{equation}
Formulas of this type are sometimes called harmonic product relations.

On the other hand, the shuffle product $\sh$ is defined by 
\begin{equation}
\begin{split}
& 1\sh w_1=w_1\sh 1=w_1,\\
& u_1 w_1\sh u_2 w_2=u_1(w_1\sh u_2w_2)+u_2(u_1w_1\sh w_2),
\end{split}
\label{1-8}
\end{equation}
for $w_1,w_2\in \frak{H}^1$ and $u_1,u_2 \in \{x,y\}$, and extended by $\mathbb{Q}$-bilinearity. This product was originally studied by Reutenauer \cite{Reu1,Reu2}. Note that this product can be defined on $\frak{H}$. Similarly to the case of $\ast$, with respect to $\sh$, it is known that $\frak{H}^1$ becomes a commutative algebra, $\frak{H}^0$ is its subalgebra, and the map $Z$ is also an algebra-homomorphism, that is, 
\begin{equation}
Z(w_1\sh w_2)=Z(w_1)Z(w_2)  \label{1-9}
\end{equation}
for $w_1,w_2\in \frak{H}^0$ (see \cite[Theorem 4.1]{Ho-Oh}). 
Applying \eqref{1-8} repeatedly, we find that $w_1\sh w_2$ $(w_1,w_2\in \mathfrak{H}^0)$ can be written as a linear combination of words in $\mathfrak{H}^0$. We call this procedure the ``shuffle product procedure''\,(SPP) for $w_1\sh w_2$. 
Combining \eqref{1-7} and \eqref{1-9}, we obtain 
\begin{equation}
Z(w_1\ast w_2)=Z(w_1\sh w_2). \label{1-10}
\end{equation}
Combining \eqref{1-10} with HPP for $w_1\ast w_2$ and SPP for $w_1\sh w_2$, we obtain non-trivial relations for MZVs, 
which are called (finite) double shuffle relations. 

In this paper, we give a new interpretation of SPP. From definition \eqref{1-8}, for example, we can obtain
\begin{align}
z_p\sh z_rz_q& =\sum_{i=0}^{r-1}\binom{p-1+i}{i}z_{p+i}z_{r-i}z_{q} +\sum_{i=0}^{p-1}\binom{r-1+i}{i} z_{r+i}(z_{p-i}\sh z_q) \label{1-11}
\end{align}
for $p,q,r\in \mathbb{Z}$ with $p,r\geq 2$. 
On the other hand, we will later (see Section \ref{sec-3}) prove that 
\begin{equation}
\begin{split}
\zeta(p)\zeta(r,q)& = \sum_{i=0}^{r-1}\binom{p-1+i}{i}\sum_{l,m,n=1}^\infty \frac{1}{(l+m+n)^{p+i}(m+n)^{r-i}m^q} \\
& +\sum_{i=0}^{p-1}\binom{r-1+i}{i}\sum_{l,m,n=1}^\infty \frac{1}{(l+m+n)^{r+i}l^{p-i}m^q},
\end{split}
\label{1-12}
\end{equation}
by considering the partial fraction decomposition. 
The left-hand side of \eqref{1-11} corresponds to that of \eqref{1-12} via the $Z$-map (by \eqref{1-5}, \eqref{1-9}). Also, the first sum on the right-hand side of \eqref{1-11} corresponds to that on the right-hand side of \eqref{1-12} via the $Z$-map.
So the second sum on the right-hand side of \eqref{1-11} corresponds to that on the right-hand side of \eqref{1-12} via the $Z$-map, and hence the following correspondence is plausible:
\begin{equation}
Z(z_{r+i}(z_{p-i}\sh z_q))=\sum_{l,m,n=1}^\infty \frac{1}{(l+m+n)^{r+i}l^{p-i}m^q}.\label{1-13}
\end{equation}
From this observation, we expect that there is an explicit correspondence between SPP and partial fraction decompositions.

The main aim of this paper is to confirm this expectation. 
We will later confirm \eqref{1-13} and will point out that the right-hand side of \eqref{1-13} is equal to a special value of the zeta-function of the root system of $A_3$ type (see Section \ref{sec-2}) defined by the authors in \cite{KMT1,KMT2,KMT3,MT1}. 
As a generalization of this fact, we will 
prove that each step of SPP can be realized as the corresponding step of partial fraction decompositions of zeta-functions of root systems of $A_N$ type. 

\ 

In Section \ref{sec-2}, we will briefly recall the definitions and results about zeta-functions of root systems of $A_N$ type, and state the main results. More precisely, we will explicitly give the form of each step of SPP (see Proposition \ref{P-2-2}), and will show that each step of SPP can be realized as the corresponding step of partial fraction decompositions of zeta values of root systems (see Theorem \ref{T-2-3}). 
An important point is that, in our argument in the following sections, 
we use neither Drinfel'd integral expressions \eqref{1-4} for MZVs,
nor the multiplicative property \eqref{1-9} a priori.     In particular,
our argument includes a proof of \eqref{1-9} without using Drinfel'd
integral expressions. 
Consequently we give a proof of extended double shuffle relations (see \cite{IKZ}) without using Drinfel'd integral expressions (see Theorem \ref{T-2-5}). 

In Section \ref{sec-3}, 
we will observe the cases of double and triple zeta values, and consider a certain correspondence between SPP and partial fraction decompositions. 

In Section \ref{sec-4}, we will give the proofs of the results stated in Section \ref{sec-2}. 

The fact that SPP can be 
realized by partial fraction decompositions has an important 
application. In fact, in Section \ref{sec-5}, we will give some functional relations which include double shuffle relations. Several years ago, the second-named author proposed the question whether known relations for MZVs are valid only at integer points, or valid also continuously at other points (see \cite{MatXi}). 
Harmonic product relations, such as \eqref{1-7-2}, are valid not only at integer points but also at any other complex points. This is clear because harmonic product relations can be obtained by decompositions of infinite sums. 
Other answers to the above question are given by several explicit functional relations proved recently by the authors (see \cite{KMT2,MT1,MT2,TsCa}), and 
Bradley's partition identities (see \cite{Brad}). 
Now in this paper, we establish that SPP can be described in terms of decompositions of infinite sums. 
Therefore, as in the case of harmonic product relation, we can extend double shuffle relations to functional relations which are valid at any complex points (see Theorem \ref{T-5-1}). 


In Section \ref{sec-6}, we remark that each step of SPP in the sense of \eqref{1-8} can be realized as the corresponding step of partial fraction decompositions of zeta values of root systems.

\ 

\section{Main results} \label{sec-2}

First we briefly recall zeta-functions of root systems of $A_N$ type (for details, see \cite{MT1}, and also \cite{KMT1,KMT2,KMT3}). 

For the complex semisimple Lie algebra $\mathfrak{sl}(N+1)$ of $A_N$ type, 
the zeta-function of the root system of $A_N$ type is defined by 
\begin{equation}
\begin{split}
& \zeta_{N}({\bf s};A_N) =\zeta_{N}({\bf s};\mathfrak{sl}(N+1)) =\sum_{l_1,l_2,\cdots,l_N=1}^\infty \ \prod_{i=1}^{N} \left\{\prod_{j=i}^{N}\left(\sum_{k=i}^{N+i-j}l_{k}\right)^{-s_{ij}}\right\},
\end{split}
\label{2-1}
\end{equation}
where ${\bf s}=(s_{ij})_{1\leq i\leq j\leq N}\in \mathbb{C}^{(N+1)N/2}$ (see \cite[(2.2)]{MT1}). Note that MZV is a special value of this function. In fact, 
from \eqref{2-1}, we see that 
\begin{equation}
\zeta_{N}(s_{11},s_{12},\ldots,s_{1N},0,0,\ldots,0;A_N)=\zeta(s_{11},s_{12},\ldots,s_{1N}). \label{2-2}
\end{equation}
We give the following explicit examples of \eqref{2-1}: 
\begin{equation}
\begin{split}
& \zeta_{1}(s_1;A_1) =\zeta(s_1) =\sum_{n=1}^\infty \frac{1}{n^{s_1}},\\
& \zeta_{2}(s_{11},s_{12},s_{22};A_2) =\sum_{m,n=1}^\infty \frac{1}{(m+n)^{s_{11}}m^{s_{12}} n^{s_{22}}}, \\
& \zeta_{3}(s_{11},s_{12},s_{13},s_{22},s_{23},s_{33};A_3) \\
& \quad =\sum_{l,m,n=1}^\infty \frac{1}{(l+m+n)^{s_{11}}(l+m)^{s_{12}}l^{s_{13}}(m+n)^{s_{22}}m^{s_{23}}n^{s_{33}}},\\
& \zeta_{4}(s_{11},s_{12},s_{13},s_{14},s_{22},s_{23},s_{24},s_{33},s_{34},s_{44};A_4) \\
& \quad =\sum_{l,m,n,h=1}^\infty \frac{1}{(l+m+n+h)^{s_{11}}(l+m+n)^{s_{12}}(l+m)^{s_{13}}l^{s_{14}}} \\
& \quad\quad\quad \times \frac{1}{(m+n+h)^{s_{22}}(m+n)^{s_{23}}m^{s_{24}}(n+h)^{s_{33}}n^{s_{34}}h^{s_{44}}}.
\end{split}
\label{2-3}
\end{equation}
Note that the orders of variables $\{s_{ij}\}$ on the left-hand sides are different from those in our previous papers \cite{KMT1,KMT2,MT1}. Here, we adopt the order of variables corresponding to that of MZVs.

\begin{remark} \label{R-2-1}
In our previous work, we defined zeta-functions of root systems of not only $A_N$ type but also any $X_r$ type which we denote by $\zeta_N({\bf s};X_N)$, where $X=A,B,C,D,E,F,G$ (for details, see \cite{KMT1,KMT2,KMT3}). 
The origin of these functions goes back to Witten zeta-functions which were studied by Witten \cite{Wi} in connection with quantum gauge theory, and essentially formulated by Zagier in \cite{Za}. For any semisimple Lie algebra ${\frak g}$ of $X_r$ type, the Witten zeta-function associated with ${\frak g}$ is a one-variable function defined by the form $K({\frak g})^s \zeta_N(s,s,\ldots,s;X_r)$, where $K({\frak g})$ is a constant depending only on ${\frak g}$ (see \cite[Section 2]{KMT3}). 
In the 1950's, Tornheim \cite{To} first studied the value $\zeta_2(d_1,d_2,d_3;A_2)$ for $d_1,d_2,d_3 \in \mathbb{N}$, which is called the Tornheim double sum. Independently, Mordell \cite{Mo} studied the value $\zeta_2(2d,2d,2d;A_2)$ $(d\in \mathbb{N})$. From the analytic viewpoint, the second-named author \cite{MatMil} studied the multi-variable function $\zeta_2(s_1,s_2,s_3;A_2)$ for $s_1,s_2,s_3 \in \mathbb{C}$ which is called the Mordell-Tornheim double zeta-function, denoted by $\zeta_{MT,2}(s_1,s_2,s_3)$. He showed the meromorphic continuation of $\zeta_{MT,2}(s_1,s_2,s_3)$, etc. Note that the function $2^s\zeta_{MT,2}(s,s,s)$ coincides with the Witten zeta-function $K(\mathfrak{sl}(3))^s \zeta_2(s,s,s;A_2)$ associated with the Lie algebra $\frak{sl}(3)$ (see Witten \cite{Wi}, Zagier \cite{Za}). In \cite{MatBonn}, the second-named author considered $\zeta_{2}(s_1,s_2,s_3,s_4;B_2)$ which can be regarded as a multi-variable version of the Witten zeta-function of $B_2$ type. 
As a generalization of these results, we generally defined multi-variable zeta-functions $\zeta_N({\bf s};X_N)$ of the root system of $X_r$ type and studied their properties in \cite{KMT1,KMT2,KMT3,MT1}.
\end{remark}

To state our main results, we first prepare the following proposition, which is a generalization of \eqref{1-11} or \eqref{3-1} below. 
Note that the empty sum and the empty product are to be understood as $0$ and $1$, respectively.

\begin{prop}\label{P-2-2}
For $a,b \in \mathbb{N}$ and $p_1,\ldots,p_a,\,q_1,\ldots,q_b \in \mathbb{N}$,
\begin{equation}
\begin{split}
& z_{p_a}\cdots z_{p_1}\sh z_{q_b}\cdots z_{q_1} \\
& =\sum_{\tau=0}^{q_b-1}\binom{p_a-1+\tau}{\tau}z_{p_a+\tau}(z_{p_{a-1}}\cdots z_{p_1} \sh z_{q_b-\tau}z_{q_{b-1}}\cdots z_{q_1}) \\
& \quad +\sum_{\tau=0}^{p_a-1}\binom{q_b-1+\tau}{\tau} z_{q_b+\tau}(z_{p_a-\tau}z_{p_{a-1}}\cdots z_{p_1}\sh z_{q_{b-1}}\cdots z_{q_1}).
\end{split}
\label{2-4}
\end{equation}
\end{prop}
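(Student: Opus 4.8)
The plan is to pass to the alphabet $\{x,y\}$ and reduce \eqref{2-4} to a single shuffle identity for words beginning with a block of $x$'s. Setting $m=p_a-1$, $n=q_b-1$, $A=z_{p_{a-1}}\cdots z_{p_1}$ and $B=z_{q_{b-1}}\cdots z_{q_1}$, we have $z_{p_a}\cdots z_{p_1}=x^m yA$ and $z_{q_b}\cdots z_{q_1}=x^n yB$, while $z_{p_a+\tau}=x^{m+\tau}y$ and $z_{q_b-\tau}=x^{n-\tau}y$. Hence \eqref{2-4} becomes
\[
x^m yA\sh x^n yB=\sum_{j=0}^{n}\binom{m+j}{j}x^{m+j}y\,(A\sh x^{n-j}yB)+\sum_{i=0}^{m}\binom{n+i}{i}x^{n+i}y\,(x^{m-i}yA\sh B),
\]
and this is the identity I would establish for arbitrary $A,B\in\mathfrak{H}^1$ and $m,n\in\mathbb{N}_0$.

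The underlying combinatorial picture, which guides the whole argument, is that $x^m yA\sh x^n yB$ is a sum over interleavings of the two letter-strings, and each resulting word opens with a block of $x$'s terminated by the first $y$. That first $y$ is furnished by exactly one of the two factors. If it comes from the first factor, all $m$ of its leading $x$'s and some $j\le n$ of the second factor's leading $x$'s precede it, the $\binom{m+j}{j}$ interleavings of these blocks produce the prefix $x^{m+j}y$, and what remains to be shuffled is $A\sh x^{n-j}yB$; this yields the first sum with $\tau=j$. The symmetric possibility gives the second sum. The two possibilities are disjoint and exhaustive, so nothing is double-counted.

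To turn this into a proof I would induct on $m+n$, invoking the defining recursion \eqref{1-8} on the leading letters instead of the informal count. The base case $m=n=0$ is immediate, since \eqref{1-8} gives $yA\sh yB=y(A\sh yB)+y(yA\sh B)$, matching the right-hand side when both binomial sums reduce to their $\tau=0$ terms. For the inductive step, when $m,n\geq 1$ the recursion gives $x^m yA\sh x^n yB=x(x^{m-1}yA\sh x^n yB)+x(x^m yA\sh x^{n-1}yB)$; I would insert the inductive hypothesis into each summand, multiply through by $x$, and re-index the sums. The boundary cases $m=0<n$ and $n=0<m$ are treated identically, the only difference being that \eqref{1-8} now strips a $y$ from the exhausted factor.

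The delicate point, and the real content of the induction, is the binomial bookkeeping in the recombination step. After multiplying by $x$ and shifting indices, the coefficient of a fixed prefix $x^{m+j}y$ arises as the sum of the two contributions $\binom{m-1+j}{j}$ and $\binom{m+j-1}{j-1}$ coming from the two branches of the recursion, and these must collapse to $\binom{m+j}{j}$ by Pascal's rule; the same happens for the prefixes $x^{n+i}y$. I expect the main care to be needed at the ends of the ranges, where one of the $x$-blocks is exhausted and a single branch supplies the boundary coefficient $\binom{m}{0}=\binom{n}{0}=1$; the combinatorial description above is what certifies that these endpoint terms line up correctly.
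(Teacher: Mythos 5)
Your proposal is correct and follows essentially the same route as the paper's own proof: induction on the total exponent of the two leading letters, with the defining recursion \eqref{1-8} applied according to whether each leading letter is $x$ or $y$, followed by re-indexing and Pascal's rule to merge the two branches. The only differences are cosmetic — your shift to $m=p_a-1$, $n=q_b-1$ and arbitrary tails $A,B\in\mathfrak{H}^1$ lets you drop the paper's (in fact redundant) outer induction on $a+b$, and the combinatorial interpretation of the first $y$ is a helpful gloss not present in the paper.
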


Consequently, SPP is performed by applying this proposition repeatedly. 
Then we can see that only terms of the form 
$$z_{r_{c}}z_{r_{c-1}}\cdots z_{r_1}(z_{p_a}z_{p_{a-1}}\cdots z_{p_1}\sh z_{q_b}z_{q_{b-1}}\cdots z_{q_1})$$
appear in SPP. Hence, from \eqref{2-4}, 
we find that each step of SPP is expressed by
\begin{equation}
\begin{split}
& z_{r_{c}}z_{r_{c-1}}\cdots z_{r_1}(z_{p_a}\cdots z_{p_1}\sh z_{q_b}\cdots z_{q_1}) \\
& =\sum_{\tau=0}^{q_b-1}\binom{p_a-1+\tau}{\tau}z_{r_{c}}z_{r_{c-1}}\cdots z_{r_1}z_{p_a+\tau}(z_{p_{a-1}}\cdots z_{p_1} \sh z_{q_b-\tau}z_{q_{b-1}}\cdots z_{q_1}) \\
& \quad +\sum_{\tau=0}^{p_a-1}\binom{q_b-1+\tau}{\tau} z_{r_{c}}z_{r_{c-1}}\cdots z_{r_1}z_{q_b+\tau}(z_{p_a-\tau}z_{p_{a-1}}\cdots z_{p_1}\sh z_{q_{b-1}}\cdots z_{q_1}).
\end{split}
\label{2-4-2}
\end{equation}
For each term on the both sides, we can determine its image via the $Z$-map as follows.

\begin{theorem} \label{T-2-3}
Let $a,b\in \mathbb{N}$ and $c \in \mathbb{N}_{0}$. For $(p_\eta)\in \mathbb{N}^a$, $(q_\xi)\in \mathbb{N}^b$, $(r_\sigma)\in \mathbb{N}^c$, 
\begin{equation}
\begin{split}
& Z\left(z_{r_{c}}z_{r_{c-1}}\cdots z_{r_1}(z_{p_a}z_{p_{a-1}}\cdots z_{p_1}\sh z_{q_b}z_{q_{b-1}}\cdots z_{q_1})\right) =\zeta_{a+b+c}\left(\{d_{ij}\};A_{a+b+c}\right)
\end{split}
\label{2-5}
\end{equation}
holds under the assumption that the right-hand side of \eqref{2-5} is convergent, where 
\begin{equation}
d_{ij}=
\begin{cases}
r_{c+1-j} & (i=1;\,1\leq j \leq c) \\
p_{a+b+c+1-j} & (i=1;\,b+c+1\leq j \leq a+b+c) \\
q_{a+b+c+1-j} & (i=a+1;\,a+c+1 \leq j \leq a+b+c)\\
0   & (\text{otherwise}).
\end{cases}
\label{2-6}
\end{equation}
Furthermore 
each step of the shuffle product procedure (SPP) given by \eqref{2-4-2} can be realized as the corresponding step of partial fraction decompositions (PFD) of zeta values of root systems. 
\end{theorem}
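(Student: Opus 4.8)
The plan is to read the step \eqref{2-4-2} through the $Z$-map and recognize the resulting identity as a single partial fraction decomposition. First I would apply $Z$ to both sides of \eqref{2-4-2}: since the first assertion \eqref{2-5} identifies the $Z$-image of every word $z_{r_c}\cdots z_{r_1}(z_{p_a}\cdots z_{p_1}\sh z_{q_b}\cdots z_{q_1})$ with a value $\zeta_{a+b+c}(\{d_{ij}\};A_{a+b+c})$, and since each step \eqref{2-4-2} preserves the total depth $N=a+b+c$, every term on both sides is carried to an explicit rank-$N$ root-system zeta-value. Thus \eqref{2-4-2} becomes a candidate identity among such values, and the task is to show it is exactly a partial fraction decomposition.

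Next I would write the left-hand value as an $N$-fold series via \eqref{2-1} and \eqref{2-6}. In this series the leading letters $z_{p_a}$ and $z_{q_b}$ of the two shuffle factors correspond to the two linear forms $X=l_1+\cdots+l_a$ (exponent $p_a$) and $Y=l_{a+1}+\cdots+l_{a+b}$ (exponent $q_b$); crucially, their sum $X+Y=l_1+\cdots+l_{a+b}$ does not itself occur in the denominator, the shortest $r$-block form being $l_1+\cdots+l_{a+b+1}$. The engine is the elementary identity
\begin{equation*}
\frac{1}{X^{p}Y^{q}}=\sum_{\tau=0}^{q-1}\binom{p-1+\tau}{\tau}\frac{1}{(X+Y)^{p+\tau}Y^{q-\tau}}+\sum_{\tau=0}^{p-1}\binom{q-1+\tau}{\tau}\frac{1}{(X+Y)^{q+\tau}X^{p-\tau}},
\end{equation*}
valid for $p,q\in\mathbb{N}$ and provable by induction on $p+q$. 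I would substitute this for $1/(X^{p_a}Y^{q_b})$ inside the series and, using the convergence hypothesis of \eqref{2-5} to interchange the finite $\tau$-sums with the $N$-fold summation, split the left-hand series into the two $\tau$-indexed families matching the right of \eqref{2-4-2}.

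The decisive step is to match, for each $\tau$, the series produced by this identity with the $Z$-image of the corresponding word. Applying \eqref{2-5}--\eqref{2-6} to the first-type word $z_{r_c}\cdots z_{r_1}z_{p_a+\tau}(z_{p_{a-1}}\cdots z_{p_1}\sh z_{q_b-\tau}z_{q_{b-1}}\cdots z_{q_1})$ --- of head length $c+1$, first factor length $a-1$, and second factor length $b$ --- yields a series whose $p$-block and $r$-block forms, together with the new form $(l_1+\cdots+l_{a+b})^{p_a+\tau}$, are precisely those produced by the partial fraction identity, while its $q$-block forms read $l_a+\cdots+l_{a+\beta-1}$ instead of the $l_{a+1}+\cdots+l_{a+\beta}$ that the identity produces. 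These two families differ only by the cyclic relabeling $\sigma$ of the summation variables in the block $\{l_a,\dots,l_{a+b}\}$ sending $l_a\mapsto l_{a+1}\mapsto\cdots\mapsto l_{a+b}\mapsto l_a$: since $\sigma$ fixes each $p$-form and each form $l_1+\cdots+l_{a+b+\gamma}$ (all of which contain the whole block), and carries the $q$-forms onto one another with matching exponents, it is a bijection of $\mathbb{N}^{N}$ under which the two series coincide. The symmetric computation, exchanging the roles of $p_a,a$ and $q_b,b$, matches the second $\tau$-sum with the second-type words. Hence $Z$ applied to \eqref{2-4-2} reproduces, term by term, the partial fraction decomposition of the left-hand zeta-value along $X$ and $Y$, which is the assertion.

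The main obstacle is the bookkeeping in this matching: tracking how the index set \eqref{2-6} and the forms of \eqref{2-1} transform when the head grows by one letter and a shuffle factor shrinks, and pinning down the precise relabeling that reconciles the partial fraction output with \eqref{2-6}. Additional care is needed in the boundary cases $a=1$ or $b=1$, where a shuffle factor degenerates to the empty word $1$ and the empty-product and empty-sum conventions must be invoked, and throughout one must keep the convergence hypothesis of \eqref{2-5} in force to legitimize the interchange of summations.
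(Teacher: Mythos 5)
Your computation is sound and is in substance identical to the paper's: the engine is the partial fraction identity \eqref{3-3} applied with $X=l_1+\cdots+l_a$ and $Y=l_{a+1}+\cdots+l_{a+b}$ (the paper's $\sum_\mu l_\mu$ and $\sum_\rho m_\rho$), and your cyclic relabeling of the block $\{l_a,\ldots,l_{a+b}\}$ is exactly the paper's renaming of the auxiliary variable $n_0$ to $l_a$, resp.\ to $m_b$, in \eqref{4-2-5}. Your reading of the index set \eqref{2-6} --- in particular that $X+Y=l_1+\cdots+l_{a+b}$ does not occur in the left-hand denominator, the shortest $r$-form being $l_1+\cdots+l_{a+b+1}$ --- and the matching of exponents after the decomposition are both correct.

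The gap is logical rather than computational: as written, your argument presupposes the first assertion \eqref{2-5} for \emph{every} word involved --- including the left-hand word whose series you decompose --- and only then verifies that the resulting identity among root-system zeta values is a partial fraction decomposition. But \eqref{2-5} is the substantive claim of the theorem: $Z$ of a shuffle-word is defined purely algebraically via \eqref{1-5} and linearity, and its identification with $\zeta_{a+b+c}(\{d_{ij}\};A_{a+b+c})$ is precisely what must be established; your proposal never establishes it. The repair is to run your matching as an induction on $a+b$. The right-hand words of \eqref{2-4-2} have $a+b$ decreased by one (and $c$ increased by one), so \eqref{2-5} for them is available by the induction hypothesis; Proposition \ref{P-2-2} together with your partial-fraction recombination then yields \eqref{2-5} for the left-hand word. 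The base case $a=b=1$ is the one place you gloss over: there the right-hand words degenerate to plain words, for which \eqref{2-5} reduces to \eqref{1-5} and \eqref{2-2}, and one application of \eqref{3-3} finishes it. This reorganization --- which is exactly the paper's proof of \eqref{4-5} --- simultaneously delivers the ``furthermore'' statement, since the inductive step exhibits the two routes around the diagram in Remark \ref{R-2-4} as one and the same computation.
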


\begin{remark} \label{R-2-4}
Theorem \ref{T-2-3} can be illustrated as the following commutative diagram of procedures:
\begin{equation*}
\begin{CD}
\begin{tabular}{|c|} \hline $\ \ \mathfrak{H}^{0}\ \ $ \\ \hline \end{tabular} @>\text{$Z$}>> \begin{tabular}{|c|} \hline {\rm Zeta values of root systems} \\ \hline \end{tabular} \\
@V\text{\rm SPP}VV @VV\text{\rm PFD}V \\
\begin{tabular}{|c|} \hline $\ \ \mathfrak{H}^{0}\ \ $ \\ \hline \end{tabular} @>\text{$Z$}>> \begin{tabular}{|c|} \hline {\rm Zeta values of root systems} \\ \hline \end{tabular}  \\
@V\text{\rm SPP}VV @VV\text{\rm PFD}V \\
\end{CD}
\end{equation*}
where SPP implies the step determined by \eqref{2-4-2} and \textrm{PFD} implies that of a certain partial fraction decomposition which corresponds to SPP. The explicit correspondence between SPP and PFD will be given in the proof of Theorem \ref{T-2-3}.
\end{remark}

\begin{remark} \label{R-2-4-2}
In Theorem \ref{T-2-3}, we only consider the case that the right-hand side of \eqref{2-5} is convergent. We can explicitly state the condition of this case as follows: 
$p_a\geq 2,\,q_b\geq 2$ if $c=0$;\ $r_c\geq 2$ if $c\geq 1$.
\end{remark}

\ 

Recall the algebra-homomorphism property \eqref{1-9} of $Z$ with respect to $\sh$: 
\begin{equation*}
Z(w_1\sh w_2)=Z(w_1)Z(w_2)\quad (w_1,w_2\in \mathfrak{H}^{0}).
\end{equation*}
This property has been proved by making use of Drinfel'd integral expressions for MZVs. On the other hand, our argument in Section \ref{sec-4} includes a proof of \eqref{1-9} without using Drinfel'd integral expressions for MZVs. We recall the regularized $Z$-map $Z^\sh\,:\,\mathfrak{H}^1 \to \mathbb{R}[T]$ defined by
\begin{equation}
Z^\sh(w)=
\begin{cases}
T & (w=y),\\
Z(w) & (w\in \mathfrak{H}^{0}),
\end{cases}
\label{2-7}
\end{equation}
where $T$ is an indeterminate (see \cite[Proposition 1]{IKZ}). As for $Z$ and $Z^\sh$, we obtain the following.

\begin{theorem} \label{T-2-5}
The algebra-homomorphism property of the $Z$-map with respect to $\sh$: 
\begin{equation}
Z(w_1\sh w_2)=Z(w_1)Z(w_2)\quad (w_1,w_2\in \mathfrak{H}^0) \label{2-8}
\end{equation}
can be proved without using Drinfel'd integral expressions for MZVs, by only using the partial fraction decompositions. Consequently, the extended double shuffle relation
\begin{equation}
Z^\sh(w_1\ast w_2-w_1\sh w_2)=0\quad (w_1\in \mathfrak{H}^1,\,w_2\in \mathfrak{H}^0)  \label{2-9}
\end{equation}
can be proved without using Drinfel'd integral expressions for MZVs. 
\end{theorem}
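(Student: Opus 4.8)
The plan is to prove the two assertions in turn: first the homomorphism property \eqref{2-8}, which is where Drinfel'd integrals usually enter, and then the extended relation \eqref{2-9}, which I will deduce from \eqref{2-8} together with the standard (integral-free) regularization formalism. For \eqref{2-8}, by $\mathbb{Q}$-bilinearity it suffices to treat words $w_1=z_{p_a}\cdots z_{p_1}$ and $w_2=z_{q_b}\cdots z_{q_1}$ lying in $\mathfrak{H}^0$, i.e.\ with $p_a\geq 2$ and $q_b\geq 2$, so that the relevant zeta values converge. Applying Theorem \ref{T-2-3} in the case $c=0$ gives
\[
Z(w_1\sh w_2)=\zeta_{a+b}(\{d_{ij}\};A_{a+b}),
\]
where by \eqref{2-6} the only nonzero entries are $d_{1j}=p_{a+b+1-j}$ for $b+1\le j\le a+b$ and $d_{a+1,j}=q_{a+b+1-j}$ for $a+1\le j\le a+b$. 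The whole of Step~1 then reduces to reading off the structure of this single root-system zeta value from \eqref{2-1}.

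The key observation is that, because $c=0$, the two surviving rows of \eqref{2-1} decouple over disjoint blocks of summation variables. Indeed, the row $i=1$ contributes only factors $(l_1+\cdots+l_{a+b+1-j})^{-d_{1j}}$ with $a+b+1-j$ running over $1,\dots,a$, hence depending solely on $l_1,\dots,l_a$; while the row $i=a+1$ contributes only factors of the form $(l_{a+1}+\cdots)^{-d_{a+1,j}}$ depending solely on $l_{a+1},\dots,l_{a+b}$. Since no summation variable is shared, the $(a+b)$-fold sum factors as a product of two independent multiple sums. Rewriting each block via the partial sums $u_m=l_1+\cdots+l_m$ (resp.\ $l_{a+1}+\cdots$) identifies the two factors, through \eqref{1-5}, with $Z(w_1)=\zeta(p_a,\dots,p_1)$ and $Z(w_2)=\zeta(q_b,\dots,q_1)$; this yields $Z(w_1\sh w_2)=Z(w_1)Z(w_2)$, proving \eqref{2-8} without any recourse to \eqref{1-4}.

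For \eqref{2-9} I would invoke the purely algebraic regularization formalism of \cite{IKZ}. The algebra $\mathfrak{H}^1$ is a polynomial algebra over $\mathfrak{H}^0$ in the single letter $y$ with respect to each of the products $\ast$ and $\sh$; this structural fact, together with the harmonic homomorphism property \eqref{1-7} and the shuffle property \eqref{2-8} just established, lets one extend $Z$ uniquely to algebra homomorphisms $Z^\ast$ and $Z^\sh\colon\mathfrak{H}^1\to\mathbb{R}[T]$ sending $y\mapsto T$, the latter being precisely \eqref{2-7}. Next I would use the comparison $Z^\sh=\rho\circ Z^\ast$ of \cite{IKZ}, where $\rho\colon\mathbb{R}[T]\to\mathbb{R}[T]$ is the $\mathbb{R}$-linear map relating the two regularizations. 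Then, for $w_1\in\mathfrak{H}^1$ and $w_2\in\mathfrak{H}^0$, the $\ast$-homomorphism property gives $Z^\ast(w_1\ast w_2)=Z^\ast(w_1)\,Z(w_2)$; since $Z(w_2)\in\mathbb{R}$ is a scalar, $\mathbb{R}$-linearity of $\rho$ yields $Z^\sh(w_1\ast w_2)=Z(w_2)\,\rho(Z^\ast(w_1))=Z(w_2)\,Z^\sh(w_1)$, whereas $Z^\sh(w_1\sh w_2)=Z^\sh(w_1)\,Z^\sh(w_2)=Z^\sh(w_1)\,Z(w_2)$. Comparing the two gives \eqref{2-9}.

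The substantive new content is concentrated in Step~1, and the genuine difficulty there is already absorbed into Theorem \ref{T-2-3}; granting that theorem, Step~1 is only bookkeeping with \eqref{2-6}. I expect the main obstacle to lie instead in Step~2, and to be one of logical hygiene rather than computation: one must certify that \emph{every} ingredient of the passage from the finite relation to the extended one — the well-definedness of $Z^\ast$ and $Z^\sh$, and above all the comparison map $\rho$ — is available without any appeal to the Drinfel'd integral \eqref{1-4}, so that the claimed integral-free character of the proof is not silently violated. Verifying that the construction of $\rho$ rests only on the asymptotics of partial sums, and that \eqref{2-8} is the sole place where integrals formerly intervened, is the point I would check most carefully.
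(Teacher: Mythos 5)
Your proposal is correct and follows essentially the same route as the paper: the paper also obtains \eqref{2-8} by setting $c=0$ in the identity \eqref{4-5} established in the proof of Theorem \ref{T-2-3} (equivalently, reading off the factorization of the resulting $A_{a+b}$ zeta value into two sums over disjoint blocks of variables), and it likewise derives \eqref{2-9} by appealing to Hoffman's integral-free proof of \eqref{1-7} and to the purely algebraic construction of $Z^{\sh}$ in \cite{IKZ}. Your closing caveat about certifying that the comparison map $\rho$ is available without the Drinfel'd integral is a fair point of logical hygiene, but the paper treats this step with the same brevity, simply citing \cite{IKZ}.
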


\begin{remark} 
In this paper, we realize MZVs as special values of zeta-functions of 
root systems of $A_N$ type.    It is possible that our theory is 
formulated without the terminology of root systems.  However 
this realization gives a new insight in the theory of MZVs, and
has some applications.
For example, since $A_N\subset C_N$ in the sense of 
\cite[Theorem 5.2]{KMT3}, it is possible to realize MZVs as special 
values of zeta-functions of root systems of $C_N$ type, and then, 
MZVs correspond to 
those with $s_\alpha=0$ for all short roots $\alpha$. In particular, 
the well-known result
\begin{equation}
\zeta(2k,\ldots,2k)\in \mathbb{Q}\cdot \pi^{2kN}\quad (k\in \mathbb{N})
\end{equation}
for MZVs of depth $N$ 
can be regarded as a generalization of Witten's volume formula 
for zeta-functions of root systems of $C_N$ type (see \cite[Theorem 4.6]{KMT4}).
\end{remark}

\ 

\section{Double and triple zeta values}  \label{sec-3}

In this section, we discuss double and triple zeta values. The results in this section are included in our main theorem, but we insert this section so that the readers can easily catch the essence of our idea from the description of these simple examples. 

First we consider the double zeta case. From \eqref{1-8}, we see that SPP for $z_p\sh z_q$ gives 
\begin{equation}
z_p \sh z_q =\sum_{\tau=0}^{q-1}\binom{p-1+\tau}{\tau}z_{p+\tau}z_{q-\tau} +\sum_{\tau=0}^{p-1}\binom{q-1+\tau}{\tau}z_{q+\tau}z_{p-\tau}, \label{3-1}
\end{equation}
which is mapped to 
\begin{align}
Z(z_p \sh z_q)& =\sum_{\tau=0}^{q-1}\binom{p-1+\tau}{\tau}\zeta(p+\tau,q-\tau) +\sum_{\tau=0}^{p-1}\binom{q-1+\tau}{\tau}\zeta(q+\tau,p-\tau) \label{3-2} 
\end{align}
via the $Z$-map for $p,q \in \mathbb{Z}$ with $p,q\geq 2$. On the other hand, we know the following partial fraction decomposition:
\begin{equation}
\begin{split}
&\frac{1}{X^\alpha Y^\beta} =\sum_{\tau=0}^{\beta-1}\binom{\alpha-1+\tau}{\tau}\frac{1}{(X+Y)^{\alpha+\tau}Y^{\beta-\tau}}  +\sum_{\tau=0}^{\alpha-1}\binom{\beta-1+\tau}{\tau}\frac{1}{(X+Y)^{\beta+\tau}X^{\alpha-\tau}}
\end{split}
\label{3-3}
\end{equation}
as a relation for formal rational functions, which can be easily proved by induction (see, for example, \cite[(1.6)]{HWZ}, \cite{Mark}). From \eqref{3-3}, we immediately obtain
\begin{align}
\zeta(p)\zeta(q)& =\sum_{\tau=0}^{q-1}\binom{p-1+\tau}{\tau}\zeta(p+\tau,q-\tau) +\sum_{\tau=0}^{p-1}\binom{q-1+\tau}{\tau}\zeta(q+\tau,p-\tau). \label{3-3-2} 
\end{align}
Comparing \eqref{3-2} and \eqref{3-3-2}, we obtain 
\begin{equation}
Z(z_p \sh z_q)=\zeta(p)\zeta(q)=Z(z_p)Z(z_q). \label{3-3-3}
\end{equation}
The above argument implies the assertion in the double case of Theorem \ref{T-2-5} and also implies that the shuffle product \eqref{3-1} corresponds to the partial fraction decomposition \eqref{3-3}. Note that Euler already noticed the correspondence between \eqref{3-3} and \eqref{3-3-2} (see Borwein et al. \cite{BBB0}). 

Next we consider the triple zeta case which is a typical case of our investigation. 
Setting $(X,Y)=(l,m+n)$ and $(\alpha,\beta)=(p,r)$ in the partial fraction
decomposition \eqref{3-3}, multiplying the both sides by $m^{-q}$, and
then summing up with respect to $l,m$ and $n$, we obtain \eqref{1-12}. 
Next we show \eqref{1-13}. In fact, it follows from \eqref{3-1} that
\begin{equation}
\begin{split}
z_{r+i}(z_{p-i}\sh z_q)& =\sum_{\tau=0}^{q-1}\binom{p-i-1+\tau}{\tau}z_{r+i}z_{p-i+\tau}z_{q-\tau} \\
& \quad +\sum_{\tau=0}^{p-i-1}\binom{q-1+\tau}{\tau}z_{r+i}z_{q+\tau}z_{p-i-\tau}.
\end{split}
\label{3-5}
\end{equation}
The right-hand side of \eqref{3-5} is mapped to 
\begin{equation}
\begin{split}
& \sum_{\tau=0}^{q-1}\binom{p-i-1+\tau}{\tau}\zeta(r+i,p-i+\tau,q-\tau) \\
& \quad +\sum_{\tau=0}^{p-i-1}\binom{q-1+\tau}{\tau}\zeta(r+i, q+\tau,p-i-\tau)
\end{split}
\label{3-6} 
\end{equation}
via the $Z$-map. On the other hand, setting $(X,Y)=(l,m)$ and $(\alpha,\beta)=(p-i,q)$ in the partial fraction decomposition \eqref{3-3}, multiplying the both sides by $(l+m+n)^{-r-i}$, and then summing 
up with respect to $l, m$ and $n$, we see that \eqref{3-6} is equal
to the right-hand side of \eqref{1-13}. Therefore we see that \eqref{1-13} indeed holds, that is,
\begin{equation}
\begin{split}
Z(z_{r+i}(z_{p-i}\sh z_q))& =\sum_{l,m,n=1}^\infty \frac{1}{(l+m+n)^{r+i}l^{p-i}m^q}\\
& =\zeta_3(r+i,0,p-i,0,q,0;A_3).
\end{split}
\label{3-7}
\end{equation}
Hence we obtain the case $(a,b,c)=(1,1,1)$ of \eqref{2-5} in Theorem \ref{T-2-3}. Moreover \eqref{3-7} implies that the right-hand side of \eqref{1-11} is mapped to the right-hand side of \eqref{1-12} 
via the $Z$-map. 
Therefore the left-hand side of \eqref{1-11} should be mapped to the
left-hand side of \eqref{1-12} via the $Z$-map, that is
\begin{equation}
\begin{split}
Z(z_p\sh z_rz_q)& =\zeta(p)\zeta(r,q)=Z(z_p)Z(z_rz_q).
\end{split}
\label{3-9-2} 
\end{equation}
In Section \ref{sec-1}, we assume \eqref{3-9-2} as a special case
of \eqref{1-9}, and deduce the correspondence of the left-hand
sides of \eqref{1-11} and \eqref{1-12}.   Here we proceed reversely to
obtain \eqref{3-9-2} as a conclusion.
Hence the triple case in Theorem \ref{T-2-5}.

\begin{remark}
Arakawa and Kaneko \cite{AK} studied double shuffle relations for multiple $L$-values by considering their own algebraic setup. For example, as a shuffle product relation, it is shown that
\begin{equation}
\begin{split}
L(p;a_1)L(q;a_2) =& \sum_{\tau=0}^{q-1}\binom{p-1+\tau}{\tau}L_{\sh}(p+\tau,q-\tau;a_1,a_2) \\
& +\sum_{\tau=0}^{p-1}\binom{q-1+\tau}{\tau}L_{\sh}(q+\tau,p-\tau;a_2,a_1),
\end{split}
\label{3-10}
\end{equation}
where $L(s;a)=\sum_{m\geq 1}e^{2\pi i a/f}m^{-s}$ and 
$$L_{\sh}(s_1,s_2;a_1,a_2)=\sum_{m,n=1}^\infty \frac{e^{2\pi ima_1/f}e^{2\pi ina_2/f}}{(m+n)^{s_1}n^{s_2}}$$
for $f \in \mathbb{N}$ and $a,a_1,a_2\in \mathbb{Z}$ with $0\leq a,a_1,a_2<f$. 
Similarly to the above consideration, it is clear that \eqref{3-10} can be deduced from \eqref{3-3}. The definition of the shuffle product $\sh$ for multiple $L$-values is essentially the same as that for MZVs. Therefore we can see that SPP for multiple $L$-values corresponds to that of partial fraction decompositions of multiple $L$-functions of root systems studied in \cite{KMT2,KMT-L}.
\end{remark}

\ 

\section{Proofs of main results}  \label{sec-4}

In this section, we will give the proofs of results stated in Section \ref{sec-2}.

\begin{proof}[Proof of Proposition \ref{P-2-2}]
We will prove this proposition by induction on $a+b\geq 2$. In the case $a+b=2$, namely $a=b=1$, the assertion implies \eqref{3-1}. 

Next we consider the case $a+b>2$. In this case, we further use the induction on $p_a+q_b\geq 2$. In the case $p_a+q_b=2$, namely $p_a=q_b=1$, we have $z_{p_a}=z_{q_b}=y$. Hence, by \eqref{1-8}, we see that 
\begin{align*}
& z_{p_a}\cdots z_{p_1}\sh z_{q_b}\cdots z_{q_1} \\
& \quad = z_{p_a}(z_{p_{a-1}}\cdots z_{p_1}\sh z_{q_b}\cdots z_{q_1})+z_{q_b}( z_{p_a}\cdots z_{p_1}\sh z_{q_{b-1}}\cdots z_{q_1}), 
\end{align*}
which implies \eqref{2-4} in the case $p_a=q_b=1$. 
Hence we have the assertion.

Now we consider the general case. Here we need to check the following three cases:

\quad (Case\,1) $p_a=1$ and $q_b>1$, 

\quad (Case\,2) $p_a>1$ and $q_b=1$, 

\quad (Case\,3) $p_a>1$ and $q_b>1$. \\
First we consider Case 1. Since $p_a=1$, namely $z_{p_a}=y$, it follows from \eqref{1-8} that 
\begin{equation}
\begin{split}
& z_{p_a}\cdots z_{p_1}\sh z_{q_b}\cdots z_{q_1} \\
& \quad = z_{1}(z_{p_{a-1}}\cdots z_{p_1}\sh z_{q_b}\cdots z_{q_1})+x(z_{1}z_{p_{a-1}}\cdots z_{p_1}\sh z_{q_{b}-1}z_{q_{b-1}}\cdots z_{q_1}). 
\end{split}
\label{4-1}
\end{equation}
By the assumption of induction in the case of $p_a+q_b-1$, the second term on the right-hand side is 
\begin{equation*}
\begin{split}
& x\sum_{\tau=0}^{q_b-2}z_{1+\tau}(z_{p_{a-1}}\cdots z_{p_1}\sh z_{q_b-1-\tau}z_{q_{b-1}}\cdots z_{q_1}) +xz_{q_b-1}(z_{1}z_{p_{a-1}}\cdots z_{p_1}\sh z_{q_{b-1}}\cdots z_{q_1})\\
& =\sum_{\rho=1}^{q_b-1}z_{1+\rho}(z_{p_{a-1}}\cdots z_{p_1}\sh z_{q_b-\rho}z_{q_{b-1}}\cdots z_{q_1}) + z_{q_b}(z_{1}z_{p_{a-1}}\cdots z_{p_1}\sh z_{q_{b-1}}\cdots z_{q_1}),
\end{split}
\end{equation*}
by putting $\rho=\tau+1$ in the first sum on the left-hand side and using $xz_{a}=z_{a+1}$. 
Hence, by \eqref{4-1}, we obtain \eqref{2-4} in Case 1. 
Similarly, we can obtain \eqref{2-4} in Case 2.  

Now we consider Case 3. 
From \eqref{1-8} and the assumption of induction in the case of $p_a+q_b-1$, we have
\begin{equation}
\begin{split}
z_{p_a}& \cdots z_{p_1}\sh z_{q_b}\cdots z_{q_1} \\
& = x(z_{p_a-1}z_{p_{a-1}}\cdots z_{p_1}\sh z_{q_b}\cdots z_{q_1})+x(z_{p_a}\cdots z_{p_1}\sh z_{q_{b}-1}z_{q_{b-1}}\cdots z_{q_1})\\
& =x\sum_{\tau=0}^{q_b-1}\binom{p_a-2+\tau}{\tau}z_{p_a-1+\tau}(z_{p_{a-1}}\cdots z_{p_1}\sh z_{q_b-\tau}z_{q_{b-1}}\cdots z_{q_1}) \\
& \quad +x\sum_{\tau=0}^{p_a-2}\binom{q_b-1+\tau}{\tau} z_{q_b+\tau}(z_{p_{a}-1-\tau}z_{p_{a-1}}\cdots z_{p_1}\sh z_{q_{b-1}}\cdots z_{q_1})\\
& \quad +x\sum_{\tau=0}^{q_b-2}\binom{p_a-1+\tau}{\tau}z_{p_a+\tau}(z_{p_{a-1}}\cdots z_{p_1}\sh z_{q_b-1-\tau}z_{q_{b-1}}\cdots z_{q_1}) \\
& \quad +x\sum_{\tau=0}^{p_a-1}\binom{q_b-2+\tau}{\tau} z_{q_b-1+\tau}(z_{p_{a}-\tau}z_{p_{a-1}}\cdots z_{p_1}\sh z_{q_{b-1}}\cdots z_{q_1}).
\end{split}
\label{4-2}
\end{equation}
Replace $\tau+1$ by $\tau$ in the second and the third sums on the right-hand side of \eqref{4-2}, and use the relation 
$$\binom{m+n-1}{n}+\binom{m+n-1}{n-1}=\binom{m+n}{n}\quad (m,n\in \mathbb{N})$$
in order to unite the first and the third sums, and the second and the fourth sums, respectively. Then, noting $xz_{p}=z_{p+1}$, we obtain \eqref{2-4} in the case of $p_a+q_b$. Thus, by induction, we complete the proof of Proposition \ref{P-2-2}. 
\end{proof}

\begin{proof}[Proof of Theorem \ref{T-2-3}]
For our assertion of the former part of Theorem \ref{T-2-3}, we first prove that 
\begin{equation}
\begin{split}
& \zeta_{a+b+c}\left(\{d_{ij}\};A_{a+b+c}\right)\\
&=\sum_{l_1,l_2,\ldots ,l_a \in \mathbb{N} \atop {m_1,m_{2},\ldots,m_b \in \mathbb{N} \atop n_{1},n_2,\ldots,n_{c}\in \mathbb{N}}} \prod_{\sigma=1}^{c}\left( \sum_{\nu=1}^{\sigma}n_\nu+\sum_{\mu=1}^{a}l_\mu+\sum_{\rho=1}^{b}m_\rho\right)^{-r_\sigma}\prod_{\eta=1}^{a}\left(\sum_{\mu=1}^{\eta}l_\mu\right)^{-p_\eta} \prod_{\xi=1}^{b}\left(\sum_{\rho=1}^{\xi}m_\rho\right)^{-q_\xi},
\end{split}
\label{4-2-1}
\end{equation}
where
\begin{align*}
& d_{1,a+b+c+1-j}=r_{j-a-b} \quad (a+b+1 \leq j\leq a+b+c),\\
& d_{1,a+b+c+1-j}=p_j \quad (1\leq j\leq a),\\
& d_{a+1,a+b+c+1-j}=q_j\quad (1\leq j\leq b),
\end{align*}
and $d_{ij}=0$ otherwise. 
In fact, by replacing $j$ by $N+1-j$ on the right-hand side of \eqref{2-1}, 
we can see that 
\begin{equation}
\begin{split}
\zeta_{N}(\{ s_{ij}\};A_N)=\sum_{l_1,\ldots,l_N}\prod_{i=1}^{N}\left\{ \prod_{j=1}^{N-i+1}\left(\sum_{k=i}^{i+j-1}l_k\right)^{-s_{i,N+1-j}}\right\}.
\end{split}
\label{4-2-2}
\end{equation}
Comparing \eqref{4-2-1} and \eqref{4-2-2} with $N=a+b+c$, we see that $m_\rho$ and $n_{\nu}$ correspond to $l_{a+\rho}$ and $l_{a+b+\nu}$, respectively. Therefore we clearly obtain \eqref{4-2-1}.

Now we prove 
\begin{equation}
\begin{split}
& Z\left(z_{r_{c}}z_{r_{c-1}} \cdots z_{r_1}(z_{p_a}\cdots z_{p_1}\sh z_{q_b}\cdots z_{q_1})\right) \\
&=\sum_{l_1,l_2,\ldots ,l_a \in \mathbb{N} \atop {m_1,m_{2},\ldots,m_b \in \mathbb{N} \atop n_{1},n_2,\ldots,n_{c}\in \mathbb{N}}} \prod_{\sigma=1}^{c}\left( \sum_{\nu=1}^{\sigma}n_\nu+\sum_{\mu=1}^{a}l_\mu+\sum_{\rho=1}^{b}m_\rho\right)^{-r_\sigma}\prod_{\eta=1}^{a}\left(\sum_{\mu=1}^{\eta}l_\mu\right)^{-p_\eta} \prod_{\xi=1}^{b}\left(\sum_{\rho=1}^{\xi}m_\rho\right)^{-q_\xi}
\end{split}
\label{4-5}
\end{equation}
by induction on $a+b\geq 2$ $(a,b \in \mathbb{N})$. We first consider the case $a+b=2$, that is, $(a,b)=(1,1)$. From \eqref{1-5} and \eqref{3-1}, we have
\begin{equation}
\begin{split}
& Z(z_{r_{c}}z_{r_{c-1}} \cdots z_{r_1}(z_{p_1}\sh z_{q_1})) \\
& =\sum_{\tau=0}^{q_1-1}\binom{p_1-1+\tau}{\tau}Z\left(z_{r_{c}}z_{r_{c-1}}\cdots z_{r_1}z_{p_1+\tau}z_{q_1-\tau}\right) \\
& \quad +\sum_{\tau=0}^{p_1-1}\binom{q_1-1+\tau}{\tau}Z\left(z_{r_{c}}z_{r_{c-1}} \cdots z_{r_1}z_{q_1+\tau}z_{p_1-\tau}\right) \\
& =\sum_{\tau=0}^{q_1-1}\binom{p_1-1+\tau}{\tau}\sum_{l, m \atop n_{1},\ldots,n_c} \frac{1}{\left\{\prod_{\xi=1}^{c} \left(\sum_{\nu=1}^{\xi}n_\nu+l+m \right)^{r_\xi}\right\} (l+m)^{p_1+\tau}m^{q_1-\tau}} \\
& \quad +\sum_{\tau=0}^{p_1-1}\binom{q_1-1+\tau}{\tau}\sum_{l,m \atop n_1,\ldots,n_c} \frac{1}{\left\{\prod_{\xi=1}^{c} \left(\sum_{\nu=1}^{\xi}n_\nu+l+m\right)^{r_\xi}\right\} (l+m)^{q_1+\tau} l^{p_1-\tau}}.
\end{split}
\label{4-2-3}
\end{equation}
Using the partial fraction decomposition \eqref{3-3} with $(X,Y)=(l,m)$ and $(\alpha,\beta)=(p_1,q_1)$, we see that the right-hand side of \eqref{4-2-3} coincides with 
$$\sum_{l, m \atop n_{1},\ldots, n_c} \frac{1}{\left\{\prod_{\xi=1}^{c} \left(\sum_{\nu=1}^{\xi}n_\nu+l+m\right)^{r_\xi}\right\} l^{p_1}m^{q_1}}.$$
This implies \eqref{4-5} in the case $a+b=2$.

Next, we consider the general case. 
Mapping each side of \eqref{2-4-2} via the $Z$-map, we have
\begin{equation}
\begin{split}
& Z\left(z_{r_{c}}z_{r_{c-1}} \cdots z_{r_1}(z_{p_a}\cdots z_{p_1}\sh z_{q_b}\cdots z_{q_1})\right) \\
& =\sum_{\tau=0}^{q_b-1}\binom{p_a-1+\tau}{\tau}Z\left(z_{r_{c}}z_{r_{c-1}}\cdots z_{r_1}z_{p_a+\tau}(z_{p_{a-1}}\cdots z_{p_1} \sh z_{q_b-\tau}z_{q_{b-1}}\cdots z_{q_1})\right) \\
&  +\sum_{\tau=0}^{p_a-1}\binom{q_b-1+\tau}{\tau} Z\left(z_{r_{c}}z_{r_{c-1}}\cdots z_{r_1}z_{q_b+\tau}(z_{p_a-\tau}z_{p_{a-1}}\cdots z_{p_1}\sh z_{q_{b-1}}\cdots z_{q_1})\right).
\end{split}
\label{4-2-4}
\end{equation}
Using the assumption of induction, we find that the right-hand side of \eqref{4-2-4} is 
\begin{equation}
\begin{split}
& =\sum_{\tau=0}^{q_b-1}\binom{p_a-1+\tau}{\tau}\sum_{l_1,\ldots,l_{a-1} \atop {m_1,\ldots,m_{b}\atop n_{0}, n_{1},\ldots,n_{c}}} \prod_{\sigma=1}^{c}\left( \sum_{\nu=1}^{\sigma}n_\nu+n_0+\sum_{\mu=1}^{a-1}l_{\mu}+\sum_{\rho=1}^{b}m_\rho\right)^{-r_\sigma}\\
& \quad \quad \times \left( n_0+\sum_{\mu=1}^{a-1}l_{\mu}+\sum_{\rho=1}^{b}m_\rho\right)^{-p_a-\tau}\prod_{\eta=1}^{a-1}\left(\sum_{\mu=1}^{\eta}l_\mu\right)^{-p_\eta} \left(\sum_{\rho=1}^{b}m_\rho\right)^{-q_b+\tau}\prod_{\xi=1}^{b-1}\left(\sum_{\rho=1}^{\xi}m_\rho\right)^{-q_\xi}\\
& \quad +\sum_{\tau=0}^{p_a-1}\binom{q_b-1+\tau}{\tau} \sum_{l_1,\ldots,l_{a} \atop {m_1,\ldots,m_{b-1}\atop n_{0},n_{1},\ldots,n_{c}}} \prod_{\sigma=1}^{c}\left( \sum_{\nu=1}^{\sigma}n_\nu+\sum_{\mu=1}^{a}l_{\mu}+n_0+\sum_{\rho=1}^{b-1}m_\rho\right)^{-r_\sigma}\\
& \quad \quad \times \left( \sum_{\mu=1}^{a}l_{\mu}+n_0+\sum_{\rho=1}^{b-1}m_\rho\right)^{-q_b-\tau}\left(\sum_{\mu=1}^{a}l_\mu\right)^{-p_a+\tau}\prod_{\eta=1}^{a-1}\left(\sum_{\mu=1}^{\eta}l_\mu\right)^{-p_\eta} \prod_{\xi=1}^{b-1}\left(\sum_{\rho=1}^{\xi}m_\rho\right)^{-q_\xi}.
\end{split}
\label{4-2-5}
\end{equation}
On the right-hand side, we rewrite $n_0$ to $l_a$ in the first part, and to $m_b$ in the second part. Then, by using the partial fraction decomposition \eqref{3-3} of with $(X,Y)=\left( \sum_{\mu=1}^{a}l_{\mu},\ \sum_{\rho=1}^{b}m_\rho\right)$ and $(\alpha,\beta)=(p_a,q_b)$, we obtain \eqref{4-5}.
Thus we obtain the proof of the former part. 

The element $z_{r_{c}}z_{r_{c-1}} \cdots z_{r_1}(z_{p_a}\cdots z_{p_1}\sh z_{q_b}\cdots z_{q_1})$ is, if we first apply the step \eqref{2-4-2} of SPP and then operate the $Z$-map, transformed to the right-hand side of \eqref{4-2-4}. On the other hand, the same element is transformed to the right-hand side of \eqref{4-5} via the $Z$-map, and then to \eqref{4-2-5} 
by partial fraction decompositions. 
This implies that the diagram of procedures in Remark \ref{R-2-4} is commutative. Thus we obtain the latter part of our assertion. We complete the proof.
\end{proof}

The explicit correspondence between SPP and the procedure of partial fraction decompositions is given in \eqref{4-2-5}. It is to be emphasized that we can successfully formulate the proof of Theorem \ref{T-2-3}, especially \eqref{4-5} and \eqref{4-2-5}, because we have the framework of zeta-functions of root systems defined by \eqref{2-1}, which is larger than the framework of MZVs.

\begin{proof}[of Theorem \ref{T-2-5}]
In the above proofs of Proposition \ref{P-2-2} and Theorem \ref{T-2-3}, we only used definition \eqref{1-5} and the linearity of the $Z$-map. 
Setting $c=0$ in \eqref{4-5}, we immediately obtain \eqref{2-8}, that is, \eqref{1-9}. 
This is hence a proof of \eqref{1-9} without using Drinfel'd integral expressions \eqref{1-4} for MZVs.
On the other hand, Hoffman proved \eqref{1-7} without using \eqref{1-4} (see \cite[Theorem 4.2]{Ho2}). Therefore the double shuffle relation \eqref{1-10} can be deduced without using \eqref{1-4}. Lastly, in \cite{IKZ}, the regularized $Z$-map $Z^\sh$ is constructed from $Z$ without using \eqref{1-4}. Hence the latter assertion of Theorem \ref{T-2-5} follows.
\end{proof}

\ 

\section{Functional relations including double shuffle relations}\label{sec-5}

As stated in Section \ref{sec-1}, equation \eqref{1-10} is called the double shuffle relation for MZVs. Based on this viewpoint and from the observations in the previous sections, we can give some functional relations for zeta-functions of root systems of $A_N$ type which include double shuffle relations. In this section we explain the 
principle of the method by illustrating the triple and the quadruple cases
in detail. 

From Theorem \ref{T-2-5}, we can obtain \eqref{2-8}, that is \eqref{1-9}, by only using the partial fraction decompositions. Therefore, for example, 
we see that \eqref{1-12} holds for $p,r\in \mathbb{N}$ with $p,r\geq 2$ and $q=s\in \mathbb{C}$ with $\Re(s)>1$, namely
\begin{equation}
\begin{split}
\zeta(p)\zeta(r,s)& = \sum_{i=0}^{r-1}\binom{p-1+i}{i}\zeta({p+i},{r-i},s) \\
& \quad +\sum_{i=0}^{p-1}\binom{r-1+i}{i}\zeta_3({r+i},0,{p-i},0,s,0;A_3).
\end{split}
\label{5-1}
\end{equation}

Similarly, we obtain the following.

\begin{lemma} \label{L-5-1}
For $p,u\in \mathbb{N}$ with $p,u \geq 2$ and $s_1,s_2\in \mathbb{C}$ with $\Re(s_1),\Re(s_2)>1$, 
\begin{equation}
\begin{split}
\zeta(p)\zeta(u,s_2,s_1) & = \sum_{i=0}^{u-1}\binom{p-1+i}{i}\zeta({p+i},{u-i},s_2,s_1) \\
& \quad +\sum_{i=0}^{p-1}\binom{u-1+i}{i}\zeta_4({u+i},0,0,p-i,0,s_2,s_1,0,0,0;A_4)
\end{split}
\label{5-2}
\end{equation}
holds. 
\end{lemma}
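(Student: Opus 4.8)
The plan is to treat Lemma~\ref{L-5-1} as the quadruple analogue of the triple identity \eqref{5-1}. The essential observation, already embodied in Theorem~\ref{T-2-5}, is that the relation between $\zeta(p)\zeta(u,s_2,s_1)$ and its right-hand side is produced entirely by the partial fraction decomposition \eqref{3-3} together with a reindexing of absolutely convergent series, and that neither step is sensitive to whether $s_1,s_2$ are integers or complex numbers. Hence I would not invoke the algebraic $\sh$-machinery directly, but instead carry out the same manipulation at the level of the infinite sums, keeping $s_1,s_2\in\mathbb{C}$ as parameters throughout.

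First I would expand the product as a fourfold sum,
\[
\zeta(p)\zeta(u,s_2,s_1)=\sum_{l\ge 1}\ \sum_{m_1>m_2>m_3>0}\frac{1}{l^p m_1^u m_2^{s_2}m_3^{s_1}},
\]
using \eqref{1-1}. Next I would apply \eqref{3-3} with $(X,Y)=(l,m_1)$ and $(\alpha,\beta)=(p,u)$ to split the factor $l^{-p}m_1^{-u}$, and then interchange the finite $i$-summation with the series. This produces two families, carrying the coefficients $\binom{p-1+i}{i}$ for $0\le i\le u-1$ and $\binom{u-1+i}{i}$ for $0\le i\le p-1$, which already match the two outer sums on the right-hand side of \eqref{5-2}.

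The heart of the argument is to identify each family by a change of summation variable. In the first family the typical term is $\sum (l+m_1)^{-p-i}m_1^{-u+i}m_2^{-s_2}m_3^{-s_1}$; setting $(M_1,M_2,M_3,M_4)=(l+m_1,\,m_1,\,m_2,\,m_3)$ gives a bijection onto $\{M_1>M_2>M_3>M_4>0\}$ and turns this term into $\zeta(p+i,u-i,s_2,s_1)$ via \eqref{1-1}. In the second family the typical term is $\sum (l+m_1)^{-u-i}l^{-p+i}m_2^{-s_2}m_3^{-s_1}$; here the substitution $(l',m',n',h')=(l,\,m_3,\,m_2-m_3,\,m_1-m_2)$ is a bijection onto $\mathbb{N}^4$ under which $l+m_1=l'+m'+n'+h'$, $m_2=m'+n'$ and $m_3=m'$, so that the term becomes $\zeta_4(u+i,0,0,p-i,0,s_2,s_1,0,0,0;A_4)$ by the defining formula \eqref{2-3}. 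Re-summing over $i$ then yields \eqref{5-2}.

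The main point demanding care is convergence, since it is what legitimizes both the termwise use of \eqref{3-3} and every rearrangement above. Under the hypotheses $p,u\ge 2$ and $\Re(s_1),\Re(s_2)>1$ the original double product converges absolutely, hence so does the fourfold sum. The only genuine subtlety is that after the decomposition an exponent such as $u-i$ may drop to $1$, so one must confirm that each reindexed series still converges. This is automatic: the outermost exponents $p+i\ge 2$ and $u+i\ge 2$ dominate the largest summation variable ($M_1$, respectively $l'+m'+n'+h'$), while $\Re(s_1),\Re(s_2)>1$ control the inner variables, in the same spirit as the convergence condition of Remark~\ref{R-2-4-2}. Once absolute convergence is in hand, all the manipulations are justified and the proof is complete.
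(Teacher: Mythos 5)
Your proposal is correct and follows essentially the same route as the paper: the paper's proof writes down the shuffle identity for $z_p\sh z_uz_rz_q$, applies the $Z$-map via Theorems \ref{T-2-3} and \ref{T-2-5}, and observes that since only the partial fraction decomposition \eqref{3-3} was used the identity persists for complex $(q,r)=(s_1,s_2)$ with real parts exceeding $1$; your argument simply unwinds those theorems into the explicit series manipulation (decomposing $l^{-p}m_1^{-u}$ via \eqref{3-3} and reindexing to recognize $\zeta(p+i,u-i,s_2,s_1)$ and $\zeta_4(u+i,0,0,p-i,0,s_2,s_1,0,0,0;A_4)$), which is exactly the computation underlying the paper's cited results. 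Your convergence discussion is sound, and could be made even cleaner by noting that each term produced by \eqref{3-3} is positive and hence dominated by the original absolutely convergent fourfold sum.
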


\begin{proof}
From \eqref{1-8}, we obtain
\begin{equation*}
\begin{split}
z_p\sh z_uz_rz_q& =\sum_{i=0}^{u-1}\binom{p-1+i}{i}z_{p+i}z_{u-i}z_{r}z_{q} +\sum_{i=0}^{p-1}\binom{u-1+i}{i} z_{u+i}(z_{p-i}\sh z_rz_q). 
\end{split}
\end{equation*}
By \eqref{2-5} in Theorem \ref{T-2-3} and \eqref{2-8} in Theorem \ref{T-2-5}, 
we have
\begin{equation*}
\begin{split}
\zeta(p)\zeta(u,r,q) & =\sum_{i=0}^{u-1}\binom{p-1+i}{i}\zeta(p+i,u-i,r,q) \\
& +\sum_{i=0}^{p-1}\binom{u-1+i}{i}\zeta_4(u+i,0,0,p-i,0,r,q,0,0,0;A_4),
\end{split}
\end{equation*}
where we have only used the partial fraction decompositions. Hence this holds for $(q,r)=(s_1,s_2)\in \mathbb{C}^2$ with $\Re(s_1),\Re(s_2)>1$.
\end{proof}

It is known (see \cite{AET,MatMil,MT1}) that all functions on the both sides of \eqref{5-1} and \eqref{5-2} can be meromorphically continued to the whole complex space. Hence \eqref{5-1} and \eqref{5-2} hold for all $s, s_1,s_2 \in \mathbb{C}$ except for the singularities. 

On the other hand, by considering the harmonic product \eqref{1-6}, we obtain 
\begin{equation}
\begin{split}
& \zeta(p)\zeta(r,s)=\zeta(p,r,s)+\zeta(r,p,s)+\zeta(r,s,p)+\zeta(p+r,s)+\zeta(r,p+s),
\end{split}
\label{5-3}
\end{equation}
and also
\begin{equation}
\begin{split}
& \zeta(p)\zeta(u,s_2,s_1) =\zeta(p,u,s_2,s_1)+\zeta(u,p,s_2,s_1)+\zeta(u,s_2,p,s_1)+\zeta(u,s_2,s_1,p) \\& \quad +\zeta(p+u,s_2,s_1)+\zeta(u,p+s_2,s_1)+\zeta(u,s_2,p+s_1).
\end{split}
\label{5-4}
\end{equation}
We know that \eqref{5-3} and \eqref{5-4} hold for not only integer points but also complex points, from the original meaning of the harmonic product and the meromorphic continuation of multiple zeta-functions (see \cite{AET,AKNagoya,Ess97,MatNagoya,Zhao}). 

Combining \eqref{5-1} - \eqref{5-4}, we obtain the following functional relations which really include double shuffle relations. This implies that the following formulas are answers to the problem proposed by the second-named author (see Section \ref{sec-1}).

\begin{theorem} \label{T-5-1}
For $p_1,p_2 \in \mathbb{N}$ with $p_1,p_2 \geq 2$,
\begin{equation}
\begin{split}
& \sum_{i=0}^{p_2-1}\binom{p_1-1+i}{i}\zeta({p_1+i},{p_2-i},s_1) \\
& \quad +\sum_{i=0}^{p_1-1}\binom{p_2-1+i}{i}\zeta_3({p_2+i},0,{p_1-i},0,s_1,0;A_3)\\
& \quad =\zeta(p_1,p_2,s_1)+\zeta(p_2,p_1,s_1)+\zeta(p_2,s_1,p_1)+\zeta(p_1+p_2,s_1)+\zeta(p_2,p_1+s_1)
\end{split}
\label{5-5}
\end{equation}
and 
\begin{equation}
\begin{split}
& \sum_{i=0}^{p_2-1}\binom{p_1-1+i}{i}\zeta({p_1+i},{p_2-i},{s_2},s_1) \\
& \quad +\sum_{i=0}^{p_1-1}\binom{p_2-1+i}{i}\zeta_4({p_2+i},0,0,p_1-i,0,s_2,s_1,0,0,0;A_4)\\
& \quad =\zeta(p_1,p_2,s_2,s_1)+\zeta(p_2,p_1,s_2,s_1)+\zeta(p_2,s_2,p_1,s_1)+\zeta(p_2,s_2,s_1,p_1) \\
& \quad \quad +\zeta(p_1+p_2,s_2,s_1)+\zeta(p_2,p_1+s_2,s_1)+\zeta(p_2,s_2,p_1+s_1)
\end{split}
\label{5-6}
\end{equation}
hold for all $s_1,s_2 \in \mathbb{C}$ except for the singularities. 
\end{theorem}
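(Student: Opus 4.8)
The plan is to derive Theorem~\ref{T-5-1} by combining the two families of expansions that we have already established for the product $\zeta(p_1)\zeta(p_2,s_1)$ (resp.\ $\zeta(p_1)\zeta(p_2,s_2,s_1)$): one coming from the shuffle side and one coming from the harmonic side. The key conceptual point is that both families are valid not merely at integer arguments but continuously in the complex variables $s_1,s_2$, so that equating them yields a genuine functional relation. First I would recall that equation~\eqref{5-1} and Lemma~\ref{L-5-1} give the shuffle-type expansions of $\zeta(p_1)\zeta(p_2,s_1)$ and $\zeta(p_1)\zeta(p_2,s_2,s_1)$, and that these were obtained \emph{only} by partial fraction decompositions (via Theorem~\ref{T-2-3} and Theorem~\ref{T-2-5}); crucially, the terms appearing are MZVs together with a zeta-value of a root system of $A_3$ (resp.\ $A_4$) type, each of which is a well-defined analytic function of $s_1$ (resp.\ $s_1,s_2$).

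Next I would invoke the harmonic-product expansions \eqref{5-3} and \eqref{5-4} of the same two products. As emphasised in Section~\ref{sec-1}, harmonic product relations are obtained by decomposing infinite sums and hence persist at all complex points where the series converge; this is the content of the remark that \eqref{5-3} and \eqref{5-4} hold at complex points by the original meaning of the harmonic product together with the meromorphic continuation of multiple zeta-functions. Thus, on a common domain of convergence (for instance $\Re(s_1),\Re(s_2)>1$ with $p_1,p_2\geq 2$ fixed integers), the left-hand side of \eqref{5-1} equals the right-hand side of \eqref{5-3}, and similarly for the quadruple case. Subtracting the shared factor $\zeta(p_1)\zeta(p_2,s_1)$ (resp.\ $\zeta(p_1)\zeta(p_2,s_2,s_1)$) and rearranging gives precisely the identities \eqref{5-5} and \eqref{5-6} on that initial domain.

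The remaining step, and the main obstacle, is the passage from the initial convergence region to all of $\mathbb{C}$ (resp.\ $\mathbb{C}^2$) minus singularities. Here I would appeal to the known meromorphic continuation of every function occurring on both sides: the multiple zeta-functions and the root-system zeta-functions $\zeta_3(\cdots;A_3)$ and $\zeta_4(\cdots;A_4)$ all continue meromorphically to the whole complex space, as cited from \cite{AET,MatMil,MT1}. Since the two sides of each identity agree as analytic functions on an open set, the identity theorem for (meromorphic) functions of several complex variables forces them to agree wherever all terms are holomorphic, i.e.\ away from the singularities. The delicate part is to verify that no genuine obstruction arises from the singular loci of the individual summands—one must check that the poles on the two sides are compatible so that the relation, read as an identity of meromorphic functions, makes sense globally; but this is guaranteed precisely because both sides are analytic continuations of one and the same function $\zeta(p_1)\zeta(p_2,s_1)$ (resp.\ the quadruple analogue), whose continuation is unique. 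Assembling these three ingredients—the shuffle expansion, the harmonic expansion, and analytic continuation—completes the proof.
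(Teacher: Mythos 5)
Your proposal is correct and follows essentially the same route as the paper: equate the shuffle-side expansions \eqref{5-1} and \eqref{5-2} (obtained purely by partial fraction decompositions) with the harmonic-side expansions \eqref{5-3} and \eqref{5-4} on a common domain of convergence, then extend to all of $\mathbb{C}$ (resp.\ $\mathbb{C}^2$) minus singularities via the known meromorphic continuations cited from \cite{AET,MatMil,MT1}. Your explicit appeal to the identity theorem for the continuation step only makes precise what the paper states tersely.
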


\begin{example}
We set $(p_1,p_2)=(2,2)$ in \eqref{5-5}. Then
\begin{equation}
\begin{split}
& \zeta_3({2},0,{2},0,s_1,0;A_3)+2\zeta_3({3},0,{1},0,s_1,0;A_3)\\
& \quad =\zeta(2,2,s_1)-2\zeta({3},{1},s_1) +\zeta(2,s_1,2)+\zeta(4,s_1)+\zeta(2,2+s_1).
\end{split}
\label{5-7}
\end{equation}
From \eqref{3-3}, we obtain
\begin{equation*}
\begin{split}
\zeta_3(a,0,b,0,c,0;A_3)=& \sum_{i=0}^{c-1}\binom{b-1+i}{i}\zeta(a,b+i,c-i) \\
& \ +\sum_{i=0}^{b-1}\binom{c-1+i}{i}\zeta(a,c+i,b-i).
\end{split}
\end{equation*}
Therefore, when $s_1\in \mathbb{N}$, we can see that \eqref{5-7} gives the following double shuffle relation for MZVs: 
\begin{equation}
\begin{split}
& \sum_{i=0}^{k-1}(1+i)\zeta(2,{2+i},k-i)+2\sum_{i=0}^{k-1}\zeta(3,{1+i},k-i) \\
& \quad +\zeta(2,k,2)+k\zeta(2,k+1,1)+2\zeta(3,k,1)\\
& =\zeta(2,2,k)-2\zeta(3,1,k)+\zeta(2,k,2)+\zeta(4,k)+\zeta(2,2+k)\quad (k\in \mathbb{N}).
\end{split}
\label{5-8}
\end{equation}
In particular when $k=1$, we have
$$6\zeta(3,1,1)+\zeta(2,2,1)=\zeta(4,1)+\zeta(2,3).$$

Similarly, combining \eqref{2-4}, \eqref{2-5} and the harmonic product formulas, we can produce functional relations for zeta-functions of the root system of $A_N$ type for any $N$, including double shuffle relations.
\end{example}

\ 

\section{Concluding remarks}\label{sec-6}

In the previous sections we understood that each step of SPP is given by \eqref{2-4-2}. However, the step \eqref{2-4-2} itself is actually obtained by applying repeatedly the fundamental shuffle product relation \eqref{1-8}:
$$u_1w_1\sh u_2w_2=u_1(w_1 \sh u_2w_2)+u_2(u_1w_1 \sh w_2)$$
for $w_1,w_2\in \mathfrak{H}_1$ and $u_1,u_2 \in \{x,y\}$. 
Now we will confirm that even this refined each step of procedures of shuffle products can be realized as the corresponding step of partial fraction decompositions of zeta values of root systems. For this confirmation, we consider \eqref{1-8} for $u_1w_1,u_2w_2\in \mathfrak{H}^{0}$, which implies the case $(u_1,u_2)=(x,x)$, that is, 
\begin{equation}
xw_1\sh xw_2=x(w_1 \sh xw_2)+x(xw_1 \sh w_2). \label{6-3-2}
\end{equation}

First we need to describe $Z(x(w_1\sh w_2))$ in terms of zeta values of root systems. More generally we prepare the following lemma.

\begin{lemma} \label{L-6-2}
Let $a,b,r\in \mathbb{N}$. For $(p_\eta)\in \mathbb{N}^a$, $(q_\xi)\in \mathbb{N}^b$, 
\begin{equation}
\begin{split}
& Z\left(x^r(z_{p_a}z_{p_{a-1}}\cdots z_{p_1}\sh z_{q_b}z_{q_{b-1}}\cdots z_{q_1})\right) =\zeta_{a+b}(\{d_{ij}\};A_{a+b})
\end{split}
\label{6-1}
\end{equation}
holds, where 
\begin{equation}
d_{ij}=
\begin{cases}
r & (i=1;\,j=1) \\
p_{a+b+1-j} & (i=1;\,b+1\leq j \leq a+b) \\
q_{a+b+1-j} & (i=a+1;\,a+1 \leq j \leq a+b)\\
0   & (\text{otherwise}).
\end{cases}
\label{6-2}
\end{equation}
\end{lemma}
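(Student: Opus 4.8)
The plan is to reduce Lemma~\ref{L-6-2} to Theorem~\ref{T-2-3} by a single application of Proposition~\ref{P-2-2}, exploiting the trivial identity $x^{r}z_{s}=z_{r+s}$ (because $x^{r}x^{s-1}y=x^{r+s-1}y$). Since every word occurring in $z_{p_a}\cdots z_{p_1}\sh z_{q_b}\cdots z_{q_1}$ begins with some $z_{t}$, after multiplication by $x^{r}$ it begins with $z_{r+t}$ and $r+t\ge 2$; hence each summand is sent by $Z$ to a convergent MZV with positive terms, so the identity makes sense with no extra hypothesis and all rearrangements below are legitimate. Applying Proposition~\ref{P-2-2} and then $x^{r}z_{s}=z_{r+s}$ to the leading letter gives
\[
\begin{aligned}
x^{r}(z_{p_a}\cdots z_{p_1}\sh z_{q_b}\cdots z_{q_1})
&=\sum_{\tau=0}^{q_b-1}\binom{p_a-1+\tau}{\tau}z_{r+p_a+\tau}(z_{p_{a-1}}\cdots z_{p_1}\sh z_{q_b-\tau}z_{q_{b-1}}\cdots z_{q_1})\\
&\quad+\sum_{\tau=0}^{p_a-1}\binom{q_b-1+\tau}{\tau}z_{r+q_b+\tau}(z_{p_a-\tau}z_{p_{a-1}}\cdots z_{p_1}\sh z_{q_{b-1}}\cdots z_{q_1}),
\end{aligned}
\]
so every term is of the shape $z_{s}(\text{shuffle})$ with a single prepended letter $s=r+p_a+\tau\ge 2$ or $s=r+q_b+\tau\ge 2$, that is, precisely the situation of Theorem~\ref{T-2-3} with $c=1$ (the convergence condition of Remark~\ref{R-2-4-2} being met).

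Next I would apply the explicit formula \eqref{4-5} of Theorem~\ref{T-2-3} to each term, relabel the unique inner summation variable $n_1$ as $l_a$ in the first family and as $m_b$ in the second, and write $L=\sum_{\mu=1}^{a}l_\mu$, $M=\sum_{\rho=1}^{b}m_\rho$. Both families then share the common factor $(L+M)^{-r}\prod_{\eta=1}^{a-1}(\sum_{\mu=1}^{\eta}l_\mu)^{-p_\eta}\prod_{\xi=1}^{b-1}(\sum_{\rho=1}^{\xi}m_\rho)^{-q_\xi}$, and the surviving bracket is exactly the right-hand side of the partial fraction decomposition \eqref{3-3} with $(X,Y)=(L,M)$ and $(\alpha,\beta)=(p_a,q_b)$, hence collapses to $L^{-p_a}M^{-q_b}$. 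This yields
\[
Z\bigl(x^{r}(z_{p_a}\cdots z_{p_1}\sh z_{q_b}\cdots z_{q_1})\bigr)
=\sum_{l_1,\dots,l_a,\,m_1,\dots,m_b}(L+M)^{-r}\prod_{\eta=1}^{a}\Bigl(\sum_{\mu=1}^{\eta}l_\mu\Bigr)^{-p_\eta}\prod_{\xi=1}^{b}\Bigl(\sum_{\rho=1}^{\xi}m_\rho\Bigr)^{-q_\xi}.
\]
Finally, putting $m_\rho=l_{a+\rho}$ and comparing with the rearranged defining expression \eqref{4-2-2} of the root-system zeta-function, the factor $(L+M)^{-r}$ matches $d_{11}=r$, the $p$-factors match $d_{1,j}=p_{a+b+1-j}$, and the $q$-factors match $d_{a+1,j}=q_{a+b+1-j}$, with all remaining $d_{ij}=0$; this is exactly \eqref{6-2}, establishing \eqref{6-1}.

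I expect the only genuine work to be the index bookkeeping in this recombination, together with the degenerate cases $a=1$ or $b=1$, where one of the inner shuffle factors is empty and the corresponding terms become single words; there one simply reads off the associated MZVs directly from \eqref{4-2-2} rather than invoking \eqref{4-5}. All of this is, however, formally identical to the recombination \eqref{4-2-4}--\eqref{4-5} already performed in the proof of Theorem~\ref{T-2-3}: the sole new feature is that the shift $x^{r}z_{s}=z_{r+s}$ raises every leading exponent by $r$, thereby contributing the overall factor $(L+M)^{-r}$, i.e.\ setting $d_{11}=r$.
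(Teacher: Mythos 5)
Your proposal is correct and follows essentially the same route as the paper's own proof: expand the shuffle once via Proposition \ref{P-2-2}, absorb $x^{r}$ into the leading letter, apply the $c=1$ case of \eqref{2-5}, rename the extra summation variable as $l_a$ (resp.\ $m_b$), and recombine the two families with the partial fraction decomposition \eqref{3-3} for $(X,Y)=\bigl(\sum_\mu l_\mu,\sum_\rho m_\rho\bigr)$, $(\alpha,\beta)=(p_a,q_b)$. The convergence remark and the explicit treatment of the degenerate cases $a=1$ or $b=1$ are welcome additions but do not change the argument.
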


\begin{proof}
Using \eqref{2-4} and \eqref{2-5}, and noting $x^rz_p=z_{p+r}$, we obtain 
\begin{equation*}
\begin{split}
& Z\left(x^r(z_{p_a}\cdots z_{p_1}\sh z_{q_b}\cdots z_{q_1})\right) \\
& =\sum_{\tau=0}^{q_b-1}\binom{p_a-1+\tau}{\tau} Z(z_{p_a+\tau+r}(z_{p_{a-1}}\cdots z_{p_1}\sh z_{q_b-\tau}z_{q_{b-1}}\cdots z_{q_1})) \\
& \quad +\sum_{\tau=0}^{p_a-1}\binom{q_b-1+\tau}{\tau} Z(z_{q_b+\tau+r}(z_{p_a-\tau}z_{p_{a-1}}\cdots z_{p_1}\sh z_{q_{b-1}}\cdots z_{q_1}))\\
& =\sum_{\tau=0}^{q_b-1}\binom{p_a-1+\tau}{\tau}\sum_{l_1,\ldots,l_{a-1} \atop {m_1,\ldots,m_{b}\atop n}} \left( n+\sum_{\mu=1}^{a-1}l_{\mu}+\sum_{\rho=1}^{b}m_\rho\right)^{-p_a-\tau-r}\\
& \quad \quad \times \prod_{\eta=1}^{a-1}\left(\sum_{\mu=1}^{\eta}l_\mu\right)^{-p_\eta} \left(\sum_{\rho=1}^{b}m_\rho\right)^{-q_b+\tau}\prod_{\xi=1}^{b-1}\left(\sum_{\rho=1}^{\xi}m_\rho\right)^{-q_\xi}\\
& \quad +\sum_{\tau=0}^{p_a-1}\binom{q_b-1+\tau}{\tau} \sum_{l_1,\ldots,l_{a} \atop {m_1,\ldots,m_{b-1}\atop n}} \left( n+\sum_{\mu=1}^{a}l_{\mu}+\sum_{\rho=1}^{b-1}m_\rho\right)^{-q_b-\tau-r}\\
& \quad \quad \times \left(\sum_{\mu=1}^{a}l_\mu\right)^{-p_a+\tau}\prod_{\eta=1}^{a-1}\left(\sum_{\mu=1}^{\eta}l_\mu\right)^{-p_\eta} \prod_{\xi=1}^{b-1}\left(\sum_{\rho=1}^{\xi}m_\rho\right)^{-q_\xi}.
\end{split}
\end{equation*}
On the right-hand side, we rewrite $n$ to $l_a$ in the first part, and to $m_b$ in the second part. Then the right-hand side is equal to
\begin{equation*}
\begin{split}
& \sum_{l_1,\ldots,l_{a} \atop {m_1,\ldots,m_{b}}} \left(\sum_{\mu=1}^{a}l_{\mu}+\sum_{\rho=1}^{b}m_\rho\right)^{-r}\prod_{\eta=1}^{a-1}\left(\sum_{\mu=1}^{\eta}l_\mu\right)^{-p_\eta} \prod_{\xi=1}^{b-1}\left(\sum_{\rho=1}^{\xi}m_\rho\right)^{-q_\xi}\\
& \quad \times \bigg\{ \sum_{\tau=0}^{q_b-1}\binom{p_a-1+\tau}{\tau} \left(\sum_{\mu=1}^{a}l_{\mu}+\sum_{\rho=1}^{b}m_\rho\right)^{-p_a-\tau}\left(\sum_{\rho=1}^{b}m_\rho\right)^{-q_b+\tau} \\
& \quad\quad + \sum_{\tau=0}^{p_a-1}\binom{q_b-1+\tau}{\tau} \left(\sum_{\mu=1}^{a}l_{\mu}+\sum_{\rho=1}^{b}m_\rho\right)^{-q_b-\tau}\left(\sum_{\mu=1}^{a}l_\mu\right)^{-p_a+\tau}\bigg\}.
\end{split}
\end{equation*}
Using the partial fraction decomposition \eqref{3-3} with $(X,Y)=\left( \sum_{\mu=1}^{a}l_{\mu},\ \sum_{\rho=1}^{b}m_\rho\right)$ and $(\alpha,\beta)=(p_a,q_b)$, 
we have
\begin{equation}
\begin{split}
& Z\left(x^r(z_{p_a}z_{p_{a-1}}\cdots z_{p_1}\sh z_{q_b}z_{q_{b-1}}\cdots z_{q_1})\right) \\
&\quad =\sum_{l_1,l_2,\ldots ,l_a \in \mathbb{N} \atop m_1,m_{2},\ldots,m_b \in \mathbb{N}} \left( \sum_{\mu=1}^{a}l_\mu+\sum_{\rho=1}^{b}m_\rho\right)^{-r}\prod_{\eta=1}^{a}\left(\sum_{\mu=1}^{\eta}l_\mu\right)^{-p_\eta} \prod_{\xi=1}^{b}\left(\sum_{\rho=1}^{\xi}m_\rho\right)^{-q_\xi},
\end{split}
\label{6-3}
\end{equation}
which coincides with $\zeta_{a+b}(\{d_{ij}\};A_{a+b})$ for $\{d_{ij}\}$ determined by \eqref{6-2}. This completes the proof.
\end{proof}

From \eqref{6-3-2} and the linearity of $Z$, we obtain
\begin{equation}
Z(xw_1\sh xw_2)=Z(x(w_1 \sh xw_2))+Z(x(xw_1 \sh w_2)).  \label{6-4}
\end{equation}
On the other hand, in Theorem \ref{T-2-5}, we have already proved that 
\begin{equation}
\begin{split}
& Z\left(xz_{p_a}z_{p_{a-1}}\cdots z_{p_1}\sh xz_{q_b}z_{q_{b-1}}\cdots z_{q_1}\right) =Z\left(xz_{p_a}z_{p_{a-1}}\cdots z_{p_1}\right)Z\left(xz_{q_b}z_{q_{b-1}}\cdots z_{q_1}\right) \label{6-6}
\end{split}
\end{equation}
without using Drinfel'd integral expressions for multiple zeta values. 
By the simple partial fraction decomposition
$$\frac{1}{X^{\alpha+1} Y^{\beta+1}}=\frac{1}{(X+Y)X^{\alpha}Y^{\beta+1}}+\frac{1}{(X+Y)X^{\alpha+1}Y^{\beta}}$$
with $(X,Y)=\left(\sum_{\mu=1}^{a}l_\mu, \sum_{\rho=1}^{b}m_\rho\right)$, we obtain
\begin{equation}
\begin{split}
& Z\left(xz_{p_a}z_{p_{a-1}}\cdots z_{p_1}\right)Z\left(xz_{q_b}z_{q_{b-1}}\cdots z_{q_1}\right) \\
& =\sum_{l_1,l_2,\ldots ,l_a \atop m_1,m_{2},\ldots,m_b} \prod_{\eta=1}^{a-1}\left(\sum_{\mu=1}^{\eta}l_\mu\right)^{-p_\eta} \left(\sum_{\mu=1}^{a}l_\mu\right)^{-p_a-1}\prod_{\xi=1}^{b-1}\left(\sum_{\rho=1}^{\xi}m_\rho\right)^{-q_\xi}\left(\sum_{\rho=1}^{b}m_\rho\right)^{-q_b-1}\\
& =\sum_{l_1,l_2,\ldots ,l_a \atop m_1,m_{2},\ldots,m_b} \bigg\{ \left( \sum_{\mu=1}^{a}l_\mu+\sum_{\rho=1}^{b}m_\rho\right)^{-1}\prod_{\eta=1}^{a}\left(\sum_{\mu=1}^{\eta}l_\mu\right)^{-p_\eta} \left(\sum_{\rho=1}^{b}m_\rho\right)^{-q_b-1}\prod_{\xi=1}^{b-1}\left(\sum_{\rho=1}^{\xi}m_\rho\right)^{-q_\xi}\\
& \ \ +\sum_{l_1,l_2,\ldots ,l_a \atop m_1,m_{2},\ldots,m_b} \left( \sum_{\mu=1}^{a}l_\mu+\sum_{\rho=1}^{b}m_\rho\right)^{-1}\left(\sum_{\mu=1}^{a}l_\mu\right)^{-p_a-1}\prod_{\eta=1}^{a-1}\left(\sum_{\mu=1}^{\eta}l_\mu\right)^{-p_\eta} \prod_{\xi=1}^{b}\left(\sum_{\rho=1}^{\xi}m_\rho\right)^{-q_\xi}\bigg\}\\
& =Z\left(x(z_{p_a}z_{p_{a-1}}\cdots z_{p_1}\sh xz_{q_b}z_{q_{b-1}}\cdots z_{q_1})\right) +Z\left(x(xz_{p_a}z_{p_{a-1}}\cdots z_{p_1}\sh z_{q_b}z_{q_{b-1}}\cdots z_{q_1})\right),
\end{split}
\label{6-5}
\end{equation}
where the last equality follows from Lemma \ref{L-6-2}. 
The combination of \eqref{6-6} 
and \eqref{6-5} implies that the left-hand side of \eqref{6-4} can be, via $Z\left(xz_{p_a}z_{p_{a-1}}\cdots z_{p_1}\right)Z\left(xz_{q_b}z_{q_{b-1}}\cdots z_{q_1}\right)$, transformed to the right-hand side of \eqref{6-4}. This shows that the diagram in Remark \ref{R-2-4} is commutative in the sense that SPP implies the step given by \eqref{6-3-2}. Therefore each step of SPP given by \eqref{6-3-2} can be realized as the corresponding step of partial fraction decompositions for zeta values of root systems.

\ 

\noindent
{\bf Acknowledgements.}\\
A part of this work was done during the authors' stay at Universit\"at W\"urzburg in the winter semester 2008/09 (November for H.T.). The authors express their sincere gratitude to Professor J\"orn Steuding and Dr. Rasa Steuding for the invitation and hospitality.


\begin{thebibliography}{0}
\bibitem{AET}
{Akiyama, S., Egami, S., Tanigawa, Y.}: 
Analytic continuation of multiple
zeta functions and their values at non-positive integers. 
{Acta Arithmetica} 98, 107--116 (2001)

\bibitem{AKNagoya}
{Arakawa, T., Kaneko, M.}: 
Multiple zeta values, poly-Bernoulli numbers and related zeta functions.
{Nagoya Mathematical Journal} {153}, 189--209 (1999)  

\bibitem{AK}
{Arakawa, T., Kaneko, M.}: 
On multiple $L$-values. 
{Journal of the Mathematical Society of Japan} {56}, 967--991 (2004) 

\bibitem{BBB0}
Borwein, J. M., Bradley, D. M., Broadhurst, D. J., Lisonek, P.: 
Combinatorial aspects of multiple zeta values. 
{Electronic Journal of Combinatorics} {5}, R38 (1998) 

\bibitem{BBB1}
Borwein, J. M., Bradley, D. M., Broadhurst, D. J., Lisonek, P.: 
Special values of multidimensional polylogarithms.
{Transactions of the American Mathematical Society} {353}, 907--941 (2001) 

\bibitem{BB0}
{Bowman, D., Bradley, D. M.}: 
Multiple polylogarithms: a brief survey. 
In {Conference on $q$-Series with Applications to Combinatorics,
Number Theory, and Physics} (Urbana, IL, 2000), Contemp. Math.
291, B. C. Berndt and K. Ono (eds.), Amer. Math. Soc., Providence,
RI, pp. 71--92 (2001)

\bibitem{BB1}
{Bowman, D., Bradley, D. M.}: 
The algebra and combinatorics of shuffles and multiple zeta values. 
{Journal of Combinatorial Theory, Ser. A} {97}, 43--61 (2002) 

\bibitem{BB2}
{Bowman, D., Bradley, D. M.}: 
Resolution of some open problems concerning multiple zeta evaluations of arbitrary depth. 
{Compositio Mathematica} {139}, 85--100 (2003) 

\bibitem{Brad}
{Bradley, D. M.}: 
Partition identities for the multiple zeta function. 
In {Zeta Functions, Topology and Quantum Physics}, Developments in Mathematics 14, T. Aoki et al. (eds.), Springer, New York, pp. 19--29 (2005)

\bibitem{Dri}
Drinfel'd, V. G.: 
On quasitriangular quasi-Hopf algebras and a group closely related with {Gal}$(\overline{\bf Q}/{\bf Q})$.(Russian) {Algebra i Analiz} 2 (1990), no. 4, 149--181; translation in {Leningrad Mathematical Journal} 2, no. 4, 829--860 (1991)

\bibitem{Ess97}
{Essouabri, D.}: 
Singularit{\'e}s des s{\'e}ries de Dirichlet
associ{\'e}es {\`a} des polyn{\^o}mes de plesieurs variables et applications
en th{\'e}orie analytique des nombres. 
{Annales de L'Institut Fourier (Grenoble)} {47}, 429--483 (1997)   

\bibitem{Gon}
Goncharov, A. B.: 
Periods and mixed motives. 
preprint (2002), arXiv:math/0202154.

\bibitem{HWZ}
{Huard, J. G., Williams, K. S., Zhang, N.-Y.}: 
On Tornheim's double series. 
{Acta Arithmatica} {75}, 105--117 (1996)

\bibitem{Ho}
{Hoffman, M. E.}: 
Multiple harmonic series. 
{Pacific Journal of Mathematics} {152}, 275--290 (1992)

\bibitem{Ho2}
{Hoffman, M. E.}: 
The algebra of multiple harmonic series. 
{Journal of Algebra} {194}, 477--495 (1997)

\bibitem{Ho3}
{Hoffman, M. E.}: 
Algebraic aspects of multiple zeta values. 
In {Zeta Functions, Topology and Quantum Physics}, Developments in Mathematics 14, T. Aoki et al. (eds.), Springer, New York, pp. 51-74 (2005) 

\bibitem{Ho-Oh}
Hoffman, M. E., Ohno, Y.: 
Relations of multiple zeta values and their algebraic expression. 
{Journal of Algebra} {262}, 332--347 (2003)

\bibitem{IKZ}
{Ihara, K., Kaneko, M., Zagier, D.}: 
Derivation and double shuffle relations for multiple zeta values. 
{Compositio Mathematica} {142}, 307--338 (2006)

\bibitem{Kaneko}
Kaneko, M.: 
Multiple zeta values. 
{Sugaku Expositions} {18}, 221--232 (2005)

\bibitem{KMT1}
{Komori, Y., Matsumoto, K., Tsumura, H.}: 
Zeta-functions of root systems. 
In {The Conference on $L$-functions} (Fukuoka 2006), L. Weng and M. Kaneko (eds.), World Sci. Publ., Hackensack, NJ, pp. 115--140 (2007) 

\bibitem{KMT2}
{Komori, Y., Matsumoto, K., Tsumura, H.}: 
Zeta and $L$-functions and Bernoulli polynomials of root systems. 
{Proceedings of the Japan Academy, Ser. A} {84}, 57--62 (2008)

\bibitem{KMT-L}
{Komori, Y., Matsumoto, K., Tsumura, H.}: 
On multiple Bernoulli polynomials and multiple $L$-functions of root systems. 
{Proceedings of the London Mathematical Society}, to appear.

\bibitem{KMT3}
{Komori, Y., Matsumoto, K., Tsumura, H.}: 
On Witten multiple zeta-functions associated with semisimple Lie algebras II. 
{J. Math. Soc. Japan}, to appear. 

\bibitem{KMT4}
{Komori, Y., Matsumoto, K., Tsumura, H.}: 
On Witten multiple zeta-functions associated with semisimple Lie algebras III. 
Preprint, arXiv:math/0907.0955. 

\bibitem{Mark}
Markett, C.: 
Triple sums and the Riemann zeta function.
{Journal of Number Theory} {48}, 113--132 (1994)

\bibitem{MatMil}
{Matsumoto, K.}: 
On the analytic continuation of various multiple zeta-functions. 
In {Number Theory for the Millennium II}, M. A. Bennett et al. (eds.), A K Peters, pp. 417--440 (2002) 

\bibitem{MatNagoya}
{Matsumoto, K.}: 
Asymptotic expansions of double zeta-functions of Barnes, of Shintani, and Eisenstein series. 
{Nagoya Mathematical Journal} {172}, 59--102 (2003)

\bibitem{MatBonn}
{Matsumoto, K.}: 
On Mordell-Tornheim and other multiple zeta-functions. 
In {Proceedings of the Session in Analytic Number Theory and Diophantine Equations} (Bonn, 2002), Bonner Math. Schriften 360, D.R.Heath-Brown, B.Z.Moroz (eds.), Bonn, n.25, 17pp (2003)

\bibitem{MatXi}
{Matsumoto, K.}: 
Analytic properties of multiple zeta-functions in several variables. 
In {Number Theory: Tradition and Modernization}, Proceedings of the 3rd China-Japan Seminar, W. Zhang and Y. Tanigawa (eds.), Springer, pp. 153-173 (2006) 

\bibitem{MT1}
{Matsumoto, K., Tsumura, H.}: 
On Witten multiple zeta-functions associated with semisimple Lie algebras I. 
{Annales de L'Institut Fourier (Grenoble)} {56}, 1457--1504 (2006)

\bibitem{MT2}
{Matsumoto, K., Tsumura, H.}: 
A new method of producing functional relations among multiple zeta-functions.
{Quarterly Journal of Mathematics (Oxford)} {59}, 55--83 (2008)

\bibitem{Mo}
{Mordell, L. J. }: 
On the evaluation of some multiple series. 
{Journal of the London Mathematical Society} {33}, 368--371 (1958)

\bibitem{Ohno}
{Ohno, Y.}: 
A generalization of the duality and sum formulas on the multiple zeta values.
{Journal of Number Theory} {74}, 39-43 (1999)

\bibitem{O-Z}
Ohno, Y., Zagier, D.: 
Multiple zeta values of fixed weight, depth, and height. 
{Indagationes Mathematicae (N. S.)} {12}, 483--487 (2001)

\bibitem{Reu1}
{Reutenauer, C.}: 
The shuffle algebra on the factors of a word is free. 
{Journal of Combinatorial Theory, Ser. A} {38}, 48--57 (1985)

\bibitem{Reu2}
{Reutenauer, C.}: 
{Free Lie Algebra}. 
Oxford Science Publications, 1993.

\bibitem{Tera}
Terasoma, T.: 
Mixed Tate motives and multiple zeta values.
{Inventiones Mathematicae} {149}, 339--369 (2002)

\bibitem{To}
{Tornheim, L.}: 
Harmonic double series. 
{American Journal of Mathematics} {72}, 303--314 (1950)

\bibitem{TsCa}
{Tsumura, H.}: 
On functional relations between the Mordell-Tornheim double zeta functions and the Riemann zeta function. 
{Mathematical Proceedings of the Cambridge Philosophical Society} {142}, 395--405 (2007)

\bibitem{Wi}
{Witten, E.}: 
On quantum gauge theories in two dimensions.
{Communications in Mathematical Physics} {141}, 153--209 (1991)

\bibitem {Za} 
{Zagier, D.}: 
Values of zeta functions and their applications.
In {First European Congress of Mathematics}, Vol. II (Paris, 1992): 497--512, Progress in Mathematics 120, Birkh\"auser, Basel, 1994.

\bibitem{Zhao}
{Zhao, J.}: 
Analytic continuation of multiple zeta functions. 
{Proceedings of the American Mathematical Society} {128}, 1275--1283 (2000)

\end{thebibliography}


\end{document}